\newtheorem{theorem}{Theorem}[section]
\newtheorem{lemma}[theorem]{Lemma}
\theoremstyle{definition}
\newtheorem{definition}[theorem]{Definition}
\newtheorem{example}[theorem]{Example}
\theoremstyle{remark}
\numberwithin{equation}{section}
\newcommand{\llbracket}{\left[\!\left[}
\newcommand{\rrbracket}{\right] \! \right]}
\newcommand{\vertiii}[1]{\left| #1 
    \right|}
\begin{document}

\title{Generalized Korn's Inequalities for Piecewise $H^2$ Vector Fields}


\author{David M. Williams}
\address{Department of Mechanical Engineering, The Pennsylvania State University, Pennsylvania, United States, 16802}
\curraddr{}
\email{dmw72@psu.edu}
\thanks{}

\author{Qingguo Hong}
\address{Department of Mathematics, The Pennsylvania State University, Pennsylvania, United States, 16802}
\curraddr{}
\email{huq11@psu.edu}
\thanks{}

\subjclass[2020]{Primary 65N30, 76M10, 76N06}

\date{}

\dedicatory{In memory of Philip Eric Gino Zwanenburg}

\begin{abstract}
The purpose of this paper is to construct a new class of discrete generalized Korn's inequalities for piecewise $H^2$ vector fields in three-dimensional space. The resulting Korn's inequalities are different from the standard Korn's inequalities, as they involve the trace-free symmetric gradient operator, in place of the usual symmetric gradient operator. It is anticipated that the new generalized Korn's inequalities will be useful for the analysis of a broad range of finite element methods, including  mixed finite element methods and discontinuous Galerkin methods.
\end{abstract}

\maketitle

\section{Introduction}

Broadly speaking, Korn's inequalities are an important part of the mathematical analysis of many partial differential equations and their associated finite element methods. In particular, the \emph{standard} Korn's inequalities provide an upper bound for the $L_2$ norm of the gradient of a sufficiently-smooth vector field in terms of the $L_2$ norm of the symmetric gradient and (sometimes) an auxiliary norm/seminorm. These types of Korn's inequalities can be classified into two general categories: i) continuous inequalities, and ii) discrete inequalities. The continuous inequalities are usually formulated for sufficiently smooth vector fields which are well-defined (with regard to smoothness) over the entirety of an open, bounded, Lipschitz domain. In contrast, discrete inequalities are usually formulated for sufficiently smooth vector fields which are well-defined in a piecewise fashion on Lipschitz subdomains of a larger domain. Naturally, the associated subdomains are usually convex, polyhedral elements which belong to a triangulation. The continuous class of inequalities can be used to analyze the non-discretized behavior of partial differential equations, or the behavior of numerical methods in which the entire computational domain is covered by a single element. One may consult~\cite{horgan1995korn} for a brief review of continuous Korn inequalities and their applications in the field of continuum mechanics. Conversely, the discrete class of inequalities can be used to analyze finite element methods which operate on the aforementioned partial differential equations. In particular, the discrete standard Korn's inequalities are useful for analyzing finite element methods for applications in linear elasticity~\cite{brenner2008mathematical,riviere2008discontinuous,di2013locking,di2015hybrid,hong2016robust}, and incompressible fluid dynamics~\cite{di2011mathematical,evans2013isogeometricA,evans2013isogeometricB,john2016finite,hong2016uniformly,fu2018strongly}. 

Recently, an interest in \emph{generalized} Korn's inequalities has emerged. The generalized Korn's inequalities are very similar to their standard counterparts, with the following distinction: they provide an upper bound for the gradient of a vector field in terms of the trace-free symmetric gradient instead of the standard symmetric gradient. The trace-free symmetric gradient naturally arises in the context of compressible fluid dynamics, where the trace of the velocity gradient tensor (i.e.~the divergence of the velocity field) is non-zero~\cite{anderson1990modern}. In addition, there are applications in the areas of general relativity, Cosserat elasticity, and geometry~\cite{fuchs2009application,schirra2009regularity,faraco2005geometric}. Several classes of continuous generalized Korn's inequalities have been previously developed by Dain~\cite{dain2006generalized}, Schirra~\cite{schirra2012new}, Feireisl and Novotn{\`y}~\cite{feireisl2009singular}, and Breit et al.~\cite{breit2017trace}. However, to the author's knowledge, there have been no rigorous efforts to develop discrete generalized Korn's inequalities. In principle, the development of such discrete inequalities will facilitate the construction of new, robust finite element methods in the aforementioned application areas. In addition, we note that there is an existing class of `versatile mixed finite element methods' for simulating incompressible/compressible fluid dynamics problems that will immediately benefit from the introduction of such inequalities~\cite{chen2020versatile,miller2020versatile}. In particular,  discrete generalized Korn's inequalities are required in order to construct rigorous error estimates for these methods. For all of the reasons mentioned above, this paper will attempt to derive the required class of generalized inequalities.  

In order to obtain the desired generalized inequalities, we will follow the approach of Brenner and coworkers~\cite{Brenner04,brenner2004poincare,brenner2015piecewise}. We note that other researchers have performed extensive work on constructing discrete standard Korn's inequalities, including Knobloch and Tobiska~\cite{knobloch2003korn}, Attia and Starke~\cite{attia2005discrete}, Mardal and Winther~\cite{mardal2006observation}, Zhang~\cite{zhang2017discrete}, and Lee~\cite{lee2018robust}; however, we choose to primarily focus on the work of Brenner in this paper. The interested reader is encouraged to consult~\cite{hong2016presentation} for a compact review of the other work on this topic. 

In~\cite{Brenner04}, Brenner constructs a class of discrete standard Korn's inequalities for piecewise $H^1$ vector fields in 2D and 3D. In~\cite{brenner2004poincare}, similar techniques are used to construct Poincar{\'e}--Friedrichs discrete inequalities for piecewise $H^2$ vector fields in 2D. Finally, in~\cite{brenner2015piecewise}, standard Korn and Poincar{\'e}--Friedrichs discrete inequalities are obtained for classes of meshes generated by independent refinements, for piecewise $H^1$ vector fields in 2D and 3D. 

We intend for the present work to be a natural extension of~\cite{Brenner04} to the case of generalized Korn's inequalities, and piecewise $H^2$ vector fields in 3D. The organization of this paper is as follows. In section 2, we provide an overview of generalized Korn's inequalities at the continuous level. In section 3, we present several new generalized Korn's inequalities at the discrete level. In section 4, we provide proofs for the discrete inequalities. Finally, in section 5, we conclude with a brief summary of the present work. 

\section{The Continuous Case}

Let us begin by introducing some important notation and concepts which will be beneficial during the remainder of our presentation.

One may consider a bounded, connected, open, polyhedral domain $\Omega$ in $\mathbb{R}^3$, with boundary $\partial \Omega$. By construction, the boundary of this domain is Lipschitz. On this domain, we define a generic vector field $\bm{u} \in \left[H^2 (\Omega)\right]^3$. In a natural fashion, we define the following norms and seminorms which produce finite results when acting upon $\bm{u}$
\begin{align*}
    \left\| \bm{u} \right\|_{L_2 \left(\Omega\right)} &= \left( \int_{\Omega} | \bm{u} |^2 \, dx \right)^{1/2}, \\[1.5ex]
    \left|\bm{u}\right|_{H^{1}(\Omega)}^{2} &= \sum_{j=1}^{3} \left| u_j \right|^{2}_{H^{1} (\Omega)}, \qquad
    \left\| \bm{u} \right\|_{H^1(\Omega)}^{2} = \left\| \bm{u} \right\|_{L_2 \left(\Omega\right)}^2 + \left| \bm{u} \right|_{H^{1}(\Omega)}^{2},
    \\[1.5ex]
    \left|\bm{u}\right|_{H^{2}(\Omega)}^{2} &= \sum_{j=1}^{3} \left| u_j \right|^{2}_{H^{2} (\Omega)}, \qquad
    \left\| \bm{u} \right\|_{H^2(\Omega)}^{2} = \left\| \bm{u} \right\|_{L_2 \left(\Omega\right)}^2 + \left| \bm{u} \right|_{H^{1}(\Omega)}^{2} + \left| \bm{u} \right|_{H^{2}(\Omega)}^{2},
\end{align*}
where $|\bm{u}|$ is the standard Euclidean norm of $\bm{u}$, and 
\begin{align*}
    \left| u_j \right|_{H^{k}(\Omega)} = \left( \sum_{|\bm{\alpha}| = k} \int_{\Omega} \left| D^{\bm{\alpha}} u_j \right|^2 \, dx \right)^{1/2},
\end{align*}
is the Hilbert seminorm of $u_j$ for $H^k (\Omega)$. In the equation above, $\bm{\alpha} = \left(\alpha_1, \alpha_2, \alpha_3\right)$ is the multi-index, and $D^{\bm{\alpha}}$ is the generalized, multi-dimensional derivative operator. 

Next, we introduce the following, more specialized, derivative operators
\begin{align*}
    \epsilon_{ij} \left(\bm{u}\right) &= \frac{1}{2} \left(\text{grad}_{ij}(\bm{u}) + \text{grad}_{ji}(\bm{u}) \right) = \frac{1}{2} \left( \frac{\partial u_i}{\partial x_j} + \frac{\partial u_j}{\partial x_i} \right), \\[1.5ex]
    \text{div} \left(\bm{u}\right) &= \sum_{i=1}^{3} \frac{\partial u_i}{\partial x_i}, \\[1.5ex]
    \text{curl} \left(\bm{u} \right) &= \left(\frac{\partial u_3}{\partial x_2} - \frac{\partial u_2}{\partial x_3}, \frac{\partial u_1}{\partial x_3} - \frac{\partial u_3}{\partial x_1}, \frac{\partial u_2}{\partial x_1} - \frac{\partial u_1}{\partial x_2} \right)^t,
\end{align*}
where $\bm{\epsilon} \left( \cdot \right)$ is the symmetric gradient operator, $\text{grad}(\cdot)$ is the standard gradient operator, $\text{div}\left(\cdot \right)$ is the divergence operator, and $\text{curl} \left(\cdot \right)$ is the curl operator. In addition, we introduce the trace-free symmetric gradient operator, which takes the following form
\begin{align*}
    \bm{\epsilon} \left(\bm{u}\right) - \frac{1}{3} \text{div} \left(\bm{u} \right) \mathbb{I}, 
\end{align*}
where $\mathbb{I}$ is the $3\times3$ identity matrix. Now, we can define the following norms which remain finite for all $\bm{u} \in \left[H^{1}(\Omega) \right]^3$
\begin{align}
    &\left\| \bm{\epsilon} \left(\bm{u}\right) \right\|_{L_2(\Omega) \times L_2(\Omega)} = \left( \int_{\Omega} \left| \bm{\epsilon} \left(\bm{u}\right) \right|^{2} \, dx \right)^{1/2}, \label{norm_one} \\[1.5ex]
    &\left\| \bm{\epsilon} \left(\bm{u}\right) - \frac{1}{3} \text{div} \left(\bm{u} \right) \mathbb{I} \right\|_{L_2(\Omega) \times L_2(\Omega)} = \left( \int_{\Omega} \left| \bm{\epsilon} \left(\bm{u}\right) - \frac{1}{3} \text{div} \left(\bm{u} \right) \mathbb{I} \right|^{2} \, dx \right)^{1/2}, \label{norm_two}
\end{align}
where the quantity $\left| \bm{\epsilon} \left(\bm{u}\right) \right|$ is the standard Frobenius inner-product applied to $\bm{\epsilon} (\bm{u})$. 

We note that if $\bm{u}$ is pointwise divergence-free, then the norm definitions in Eqs.~\eqref{norm_one} and \eqref{norm_two} are identical, and the necessary analysis has already been previously established in other work, (see the Introduction). However, for the case of a generic $\bm{u}$ which is not divergence-free, there are some important results which remain to be investigated. 

Towards this end, one may introduce the kernel space of the trace-free symmetric gradient
\begin{align*}
    \mathbf{CK} \left(\Omega\right) = \left\{2 \left(\bm{a} \cdot \bm{x} \right) \bm{x} - \left| \bm{x} \right|^2 \bm{a} + \rho \bm{x} + \bm{Q} \bm{x} + \bm{b} : \bm{a} \in \mathbb{R}^{3}, \bm{b} \in \mathbb{R}^{3}, \rho \in \mathbb{R}, \; \text{and} \; \bm{Q} \in \mathfrak{so}\left(3\right)   \right\},
\end{align*}
where $\bm{x} = \left(x_1, x_2, x_3\right)$ is a coordinate vector, and $\mathfrak{so}(3)$ is the space of $3\times 3$ skew-symmetric matrices. The kernel space (above) is identical to the space of conformal Killing vectors, which was originally identified in~\cite{dain2006generalized} and further studied in~\cite{schirra2012new}. This space contains infinitesimal rigid body motions, Killing vectors, and dilatations. By construction, we have that
\begin{align}
    \bm{\epsilon} \left(\bm{u}\right) - \frac{1}{3} \text{div} \left(\bm{u}\right) \mathbb{I} = \bm{0} \qquad \Longleftrightarrow \qquad \bm{u} \in \mathbf{CK} \left(\Omega\right). \label{kernel_condition}
\end{align}
It turns out that proving Korn-type inequalities for the trace-free symmetric gradient is less straightforward than proving the equivalent inequalities for the standard symmetric gradient. The main challenges arise from the fact that the kernel space for the symmetric gradient is simpler than $\mathbf{CK} (\Omega)$, as it is merely the space of infinitesimal rigid body motions
\begin{align*}
    \mathbf{RM}(\Omega) = \left\{\bm{Q} \bm{x} + \bm{b} : \bm{b} \in \mathbb{R}^{3} \; \text{and} \; \bm{Q} \in \mathfrak{so}\left(3\right)  \right\}.
\end{align*}
The key complication is that $\mathbf{RM}(\Omega)$ is linear in $\bm{x}$, whereas the space $\mathbf{CK} (\Omega)$ is quadratic. 

With the above discussion in mind, it is somewhat surprising to find that Korn's inequalities for the trace-free symmetric gradient exist. In particular, in accordance with~\cite{feireisl2009singular} (Theorem 11.22, pg. 466), we have that 
\begin{align*}
    \left\| \bm{u} \right\|_{H^{1} (\Omega)} \leq C \left( \left\| \bm{\epsilon} \left(\bm{u}\right) - \frac{1}{3} \text{div} \left(\bm{u} \right) \mathbb{I} \right\|_{L_2(\Omega) \times L_2(\Omega)} + \int_{\Omega} \left| \bm{u} \right| \, dx \right), \qquad \forall \bm{u} \in \left[ H^{1} (\Omega) \right]^{3}.
\end{align*}
Now, after some simple manipulations with H{\"o}lder's inequality, we obtain
\begin{align}
    \left\| \bm{u} \right\|_{H^{1} (\Omega)} \leq C_{\Omega} \left( \left\| \bm{\epsilon} \left(\bm{u}\right) - \frac{1}{3} \text{div} \left(\bm{u} \right) \mathbb{I} \right\|_{L_2(\Omega) \times L_2(\Omega)} + \left\| \bm{u} \right\|_{L_2(\Omega)} \right), \label{korn_basic_cont_full}
\end{align}
and
\begin{align}
    \left| \bm{u} \right|_{H^{1} (\Omega)} \leq C_{\Omega} \left( \left\| \bm{\epsilon} \left(\bm{u}\right) - \frac{1}{3} \text{div} \left(\bm{u} \right) \mathbb{I} \right\|_{L_2(\Omega) \times L_2(\Omega)} + \left\| \bm{u} \right\|_{L_2(\Omega)} \right). \label{korn_basic_cont}
\end{align}
Here, the Korn's inequality has been rewritten in the standard fashion: namely, it provides a bound for the gradient norm of $\bm{u}$ in terms of the trace-free gradient norm and the $L_2$ norm. It remains for us to specialize this inequality, and replace the $L_2$ norm with a broader class of more useful seminorms. Towards this end, we define a class of seminorms on $\left[ H^1(\Omega) \right]^{3}$ that satisfy the following constraints
\begin{align}
    \bm{\Phi} \left(\bm{v}\right) \leq C_{\bm{\Phi}} \left\| \bm{v} \right\|_{H^1(\Omega)}, \qquad \forall \bm{v} \in \left[ H^1(\Omega) \right]^{3}, \label{phi_cond_one}
\end{align}
and
\begin{align}
    \bm{\Phi} \left(\bm{m}\right) = 0 \qquad \text{and} \qquad \bm{m} \in \mathbf{CK}(\Omega) \qquad \Longleftrightarrow \qquad \bm{m} = \text{a constant vector}, \label{phi_cond_two}
\end{align}
where $C_{\bm{\Phi}}$ is a constant that depends on our particular choice of seminorm. It turns out that all seminorms that satisfy the second criterion, will also be invariant under the addition of a constant vector, such that 
\begin{align}
    \bm{\Phi} \left(\bm{v} + \bm{c} \right) = \bm{\Phi} \left(\bm{v} \right), \label{phi_cond_three}
\end{align}
where $\bm{c} \in \mathbb{R}^{3}$. 

Now, based on the seminorm properties given above, and Eq.~\eqref{korn_basic_cont}, it can be shown that
\begin{align}
    \left| \bm{v} \right|_{H^{1} (\Omega)} \leq C_{\Omega} \left( \left\| \bm{\epsilon} \left(\bm{v}\right) - \frac{1}{3} \text{div} \left(\bm{v} \right) \mathbb{I} \right\|_{L_2(\Omega) \times L_2(\Omega)} + \bm{\Phi} \left(\bm{v}\right) \right), \qquad \forall \bm{v} \in \left[ H^1(\Omega) \right]^{3}.
    \label{korn_seminorm_cont}
\end{align}
The proof of Eq.~\eqref{korn_seminorm_cont} uses the fact that $H^{1}$ is compactly embedded into $L_2$. Some details of the proof are given in Appendix A, Lemma~\ref{seminorm_lemma}. 

There are an infinite number of seminorms which satisfy the required constraints (Eqs.~\eqref{phi_cond_one} and \eqref{phi_cond_two}). However, we choose to focus on the following seminorms due to their utility for practical applications
\begin{align}
    \label{seminorm_def_one} \bm{\Phi}_{1} \left(\bm{v} \right) &= \left\| G \bm{v} \right\|_{L_2(\Omega)}, \\[1.5ex]
    \bm{\Phi}_{2} \left(\bm{v} \right) &= \sup_{ \substack{\bm{m} \in \mathbf{CK}(\Omega) \\ \label{seminorm_def_two}
    \left\| \bm{m} \right\|_{L_2(\partial \Omega)} = 1, \int_{\partial \Omega} \bm{m} \, ds = \bm{0}}} \int_{\partial \Omega} \bm{v} \cdot \bm{m} \, ds,
\end{align}
where
\begin{align*}
    G \bm{v} = \bm{v} - \frac{1}{|\Omega|} \int_{\Omega} \bm{v} \, dx,
\end{align*}
is the projection from $\left[L_{2}(\Omega)\right]^{3}$ on to the orthogonal complement of constant vector fields. Generally speaking, we have that
\begin{align}
    \left\| G\bm{v} \right\|_{L_2(\Omega)} \leq C \left\| \bm{v} \right\|_{L_2(\Omega)}, \qquad \forall \bm{v} \in \left[L_2(\Omega)\right]^{3}. \label{ortho_bound}
\end{align}

\section{The Discrete Case}

Our domain $\Omega \subset \mathbb{R}^3$ can tessellated by non-overlapping tetrahedral elements. We denote the individual elements by $T$, and the overall partition of the domain into these elements by $\mathcal{T}$. The resulting partition has the following properties:
\begin{align*}
    \overline{\Omega} = \bigcup_{T \in \mathcal{T}} \overline{T},
\end{align*}
and 
\begin{align*}
    T \cap T' = \emptyset \qquad \text{unless} \qquad T = T'. 
\end{align*}
We assume that the minimum interior angle of any tetrahedron $T \in \mathcal{T}$ is given by $\theta_{\mathcal{T}}$. In addition, we assume that the partition is conforming, such that all interior faces are shared by two tetrahedra, and all boundary faces are associated with only one tetrahedron. We denote the individual faces in the partition by $\sigma$. The total set of interior faces is denoted by $S\left(\mathcal{T}, \Omega\right)$, and for each $\sigma \in S\left(\mathcal{T}, \Omega\right)$, we can define a ``$-$" side and a ``$+$" side. Next, we define a jump operator
\begin{align*}
    \llbracket \bm{u} \rrbracket_{\sigma} = \bm{u}_{+} - \bm{u}_{-},
\end{align*}
where $\bm{u}$ is a generic vector that has well-defined values on the interior faces. It is also convenient to define a quadratic projection operator $\pi_{\sigma}$ on each face, such that $\pi_{\sigma}: \left[L_2(\sigma)\right]^3 \rightarrow \left[P_2(\sigma)\right]^3$. This operator satisfies the following standard projection inequality
\begin{align}
    \left\| \pi_{\sigma} \llbracket \bm{u} \rrbracket_{\sigma} \right\|_{L_2(\sigma)} \leq C \left\| \llbracket \bm{u} \rrbracket_{\sigma} \right\|_{L_2(\sigma)}, \qquad \forall \bm{u} \in \left[L_2(\sigma)\right]^{3}. \label{proj_ineq}
\end{align}
The definitions (above) allow us to effectively characterize vector fields which possess discontinuities along the interior faces. We are particularly interested in the class of (possibly discontinuous) piecewise $H^2$ vector fields $\bm{u} \in \left[H^2(\Omega, \mathcal{T})\right]^{3}$, where
\begin{align*}
    \left[H^2(\Omega, \mathcal{T})\right]^{3} = \left\{ \bm{u} \in \left[ L_{2}(\Omega) \right]^{3} : \bm{v}_{T} = \bm{v}|_{T} \in \left[H^{2}(T) \right]^{3} \quad  \forall T \in \mathcal{T} \right\}.
\end{align*}
The associated broken seminorms can be defined as follows
\begin{align*}
    \left| \bm{u} \right|_{H^{1}(\Omega, \mathcal{T})} = \left( \sum_{T \in \mathcal{T}} \left| \bm{u} \right|^{2}_{H^1(T)} \right)^{1/2}, \qquad  \left| \bm{u} \right|_{H^{2}(\Omega, \mathcal{T})} = \left( \sum_{T \in \mathcal{T}} \left| \bm{u} \right|^{2}_{H^2(T)} \right)^{1/2}.
\end{align*}
In addition, we can define piecewise derivative operators $\forall T \in \mathcal{T}$
\begin{align*}
    \bm{\epsilon}_{\mathcal{T}} (\bm{u}) |_{T} = \bm{\epsilon} \left( \bm{u}_{T} \right), \qquad \text{div}_{\mathcal{T}} (\bm{u}) |_{T} = \text{div} \left(\bm{u}_{T} \right). 
\end{align*}
%

With the above discussion in mind, we can formally state a new class of generalized, discrete Korn's inequalities for all $\bm{v} \in \left[H^2(\Omega, \mathcal{T})\right]^{3}$
\begin{align}
    \left| \bm{v} \right|_{H^{1} (\Omega, \mathcal{T})}^{2} \leq C \Bigg(& \left\| \bm{\epsilon}_{\mathcal{T}} (\bm{v}) - \frac{1}{3} \text{div}_{\mathcal{T}} (\bm{v}) \mathbb{I} \right\|_{L_2(\Omega) \times L_2(\Omega)}^{2} + \left(\bm{\Phi} (\bm{v}) \right)^{2} \label{korn_seminorm_disc_proj} \\[1.5ex]
    \nonumber &+ \sum_{\sigma \in S(\mathcal{T}, \Omega)} \left(\text{diam} \, \sigma\right)^{-1} \left\| \pi_{\sigma} \llbracket \bm{v} \rrbracket_{\sigma} \right\|^{2}_{L_2(\sigma)} \Bigg).
\end{align}
In accordance with Eq.~\eqref{proj_ineq}, we can immediately eliminate the dependence of the inequality on the projection operator $\pi_{\sigma}$, and obtain
\begin{align}
        \left| \bm{v} \right|_{H^{1} (\Omega, \mathcal{T})}^{2} \leq C \Bigg(& \left\| \bm{\epsilon}_{\mathcal{T}} (\bm{v}) - \frac{1}{3} \text{div}_{\mathcal{T}} (\bm{v}) \mathbb{I} \right\|_{L_2(\Omega) \times L_2(\Omega)}^{2} + \left(\bm{\Phi} (\bm{v}) \right)^{2} \label{korn_seminorm_disc} \\[1.5ex]
    \nonumber &+ \sum_{\sigma \in S(\mathcal{T}, \Omega)} \left(\text{diam} \, \sigma\right)^{-1} \left\| \llbracket \bm{v} \rrbracket_{\sigma} \right\|^{2}_{L_2(\sigma)} \Bigg).
\end{align}
The Korn's inequalities above can be expressed explicitly for each seminorm, as follows
\begin{align}
        \left| \bm{v} \right|_{H^{1} (\Omega, \mathcal{T})}^{2} \leq C \Bigg(& \left\| \bm{\epsilon}_{\mathcal{T}} (\bm{v}) - \frac{1}{3} \text{div}_{\mathcal{T}} (\bm{v}) \mathbb{I} \right\|_{L_2(\Omega) \times L_2(\Omega)}^{2} + \left\| G \bm{v} \right\|_{L_2(\Omega)}^{2} \\[1.5ex]
    \nonumber &+ \sum_{\sigma \in S(\mathcal{T}, \Omega)} \left(\text{diam} \, \sigma\right)^{-1} \left\| \llbracket \bm{v} \rrbracket_{\sigma} \right\|^{2}_{L_2(\sigma)} \Bigg), \\[1.5ex]
           \left| \bm{v} \right|_{H^{1} (\Omega, \mathcal{T})}^{2} \leq C \Bigg(& \left\| \bm{\epsilon}_{\mathcal{T}} (\bm{v}) - \frac{1}{3} \text{div}_{\mathcal{T}} (\bm{v}) \mathbb{I} \right\|_{L_2(\Omega) \times L_2(\Omega)}^{2} + \sup_{ \substack{\bm{m} \in \mathbf{CK}(\Omega) \\ 
    \left\| \bm{m} \right\|_{L_2(\partial \Omega)} = 1,  \int_{\partial \Omega} \bm{m} \, ds = \bm{0}}} \left( \int_{\partial \Omega} \bm{v} \cdot \bm{m} \, ds \right)^{2}, \\[1.5ex]
     \nonumber &+ \sum_{\sigma \in S(\mathcal{T}, \Omega)} \left(\text{diam} \, \sigma\right)^{-1} \left\| \llbracket \bm{v} \rrbracket_{\sigma} \right\|^{2}_{L_2(\sigma)} \Bigg), 
\end{align}
or equivalently,
\begin{align}
        \left| \bm{v} \right|_{H^{1} (\Omega, \mathcal{T})}^{2} \leq C \Bigg(& \left\| \bm{\epsilon}_{\mathcal{T}} (\bm{v}) - \frac{1}{3} \text{div}_{\mathcal{T}} (\bm{v}) \mathbb{I} \right\|_{L_2(\Omega) \times L_2(\Omega)}^{2} + \left\| G \bm{v} \right\|_{L_2(\Omega)}^{2} \label{useful_one} \\[1.5ex]
    \nonumber &+ \sum_{\sigma \in S(\mathcal{T}, \Omega)} \left(\text{diam} \, \sigma\right)^{-1} \left\| \llbracket \bm{v} \rrbracket_{\sigma} \right\|^{2}_{L_2(\sigma)} \Bigg), \\[1.5ex]
           \left| \bm{v} \right|_{H^{1} (\Omega, \mathcal{T})}^{2} \leq C \Bigg(& \left\| \bm{\epsilon}_{\mathcal{T}} (\bm{v}) - \frac{1}{3} \text{div}_{\mathcal{T}} (\bm{v}) \mathbb{I} \right\|_{L_2(\Omega) \times L_2(\Omega)}^{2} + \left\| \bm{v} \right\|^{2}_{L_2(\partial \Omega)} \label{useful_two} \\[1.5ex]
    \nonumber &+ \sum_{\sigma \in S(\mathcal{T}, \Omega)} \left(\text{diam} \, \sigma\right)^{-1} \left\| \llbracket \bm{v} \rrbracket_{\sigma} \right\|^{2}_{L_2(\sigma)} \Bigg).  
\end{align}
The inequalities in Eqs.~\eqref{useful_one} and \eqref{useful_two} are the most useful in practice. In particular, they can be used for mixed finite element methods and discontinuous Galerkin methods. They can also be used for continuous Galerkin methods. In this latter case, all of the jump terms vanish, and one obtains the following simplified inequalities  
\begin{align}
        \left| \bm{v} \right|_{H^{1} (\Omega, \mathcal{T})}^{2} \leq C \Bigg(& \left\| \bm{\epsilon}_{\mathcal{T}} (\bm{v}) - \frac{1}{3} \text{div}_{\mathcal{T}} (\bm{v}) \mathbb{I} \right\|_{L_2(\Omega) \times L_2(\Omega)}^{2} + \left\| G \bm{v} \right\|_{L_2(\Omega)}^{2} \Bigg), \\[1.5ex]
           \left| \bm{v} \right|_{H^{1} (\Omega, \mathcal{T})}^{2} \leq C \Bigg(& \left\| \bm{\epsilon}_{\mathcal{T}} (\bm{v}) - \frac{1}{3} \text{div}_{\mathcal{T}} (\bm{v}) \mathbb{I} \right\|_{L_2(\Omega) \times L_2(\Omega)}^{2} + \left\| \bm{v} \right\|^{2}_{L_2(\partial \Omega)} \Bigg). 
\end{align}
It remains for us to prove the key discrete result (Eq.~\eqref{korn_seminorm_disc_proj}), from whence all other discrete results follow. This will be the focus of the next section. 

\section{Proofs of Discrete Results}
In this section, we will construct a rigorous proof for Eq.~\eqref{korn_seminorm_disc_proj}. This will require the introduction of two lemmas, prior to the final result. The final result itself will be stated in a theorem at the end of the section. 

Prior to the first lemma, it is necessary to introduce a mapping operator, along with some additional notation. Let us denote the set of all vertices in $\mathcal{T}$ as $\mathcal{V} (\mathcal{T})$, and the set of all edge midpoints as $\mathcal{M} (\mathcal{T})$. Thereafter, one may define a mapping operator $E$ which maps piecewise quadratic vector fields to continuous piecewise quadratic vector fields. More precisely, we define $E: V_{\mathcal{T}} \rightarrow W_{\mathcal{T}}$, where
\begin{align*}
    V_{\mathcal{T}} &= \left\{\bm{v} \in \left[L_2(\Omega)\right]^{3}: \bm{v}_{T} = \bm{v}|_{T} \in \left[P_2(T) \right]^{3} \quad \forall T \in \mathcal{T} \right\}, \\[1.5ex]
    W_{\mathcal{T}} &= \left\{\bm{w} \in \left[H^1(\Omega)\right]^{3}: \bm{w}_{T} = \bm{w}|_{T} \in \left[P_2(T) \right]^{3} \quad \forall T \in \mathcal{T} \right\},
\end{align*}
and where
\begin{align}
    \left(E \bm{v}\right)(p) &= \frac{1}{|\mathcal{T}_p|} \sum_{T \in \mathcal{T}_{p}} \bm{v}_{T} (p), \qquad \forall p \in \mathcal{V}(\mathcal{T}), \label{edef_one} \\[1.5ex]
    \left(E \bm{v} \right)(m) &= \frac{1}{|\mathcal{T}_m|} \sum_{T \in \mathcal{T}_{m}} \bm{v}_{T} (m), \qquad \forall m \in \mathcal{M}(\mathcal{T}). \label{edef_two}
\end{align}
Here, we define $\mathcal{T}_p$ as the set of tetrahedra sharing a common vertex $p$
\begin{align*}
    \mathcal{T}_p = \left\{ T \in \mathcal{T} : p \in \partial T \right\},
\end{align*}
and $\mathcal{T}_m$ as the set of tetrahedra sharing a common edge with midpoint $m$
\begin{align*}
    \mathcal{T}_m = \left\{ T \in \mathcal{T} : m \in \partial T \right\}.
\end{align*}
Figure~\ref{tet_chain_fig} provides an example of a shared vertex and a shared edge midpoint.
\begin{figure}[h!]
\includegraphics[width=12cm]{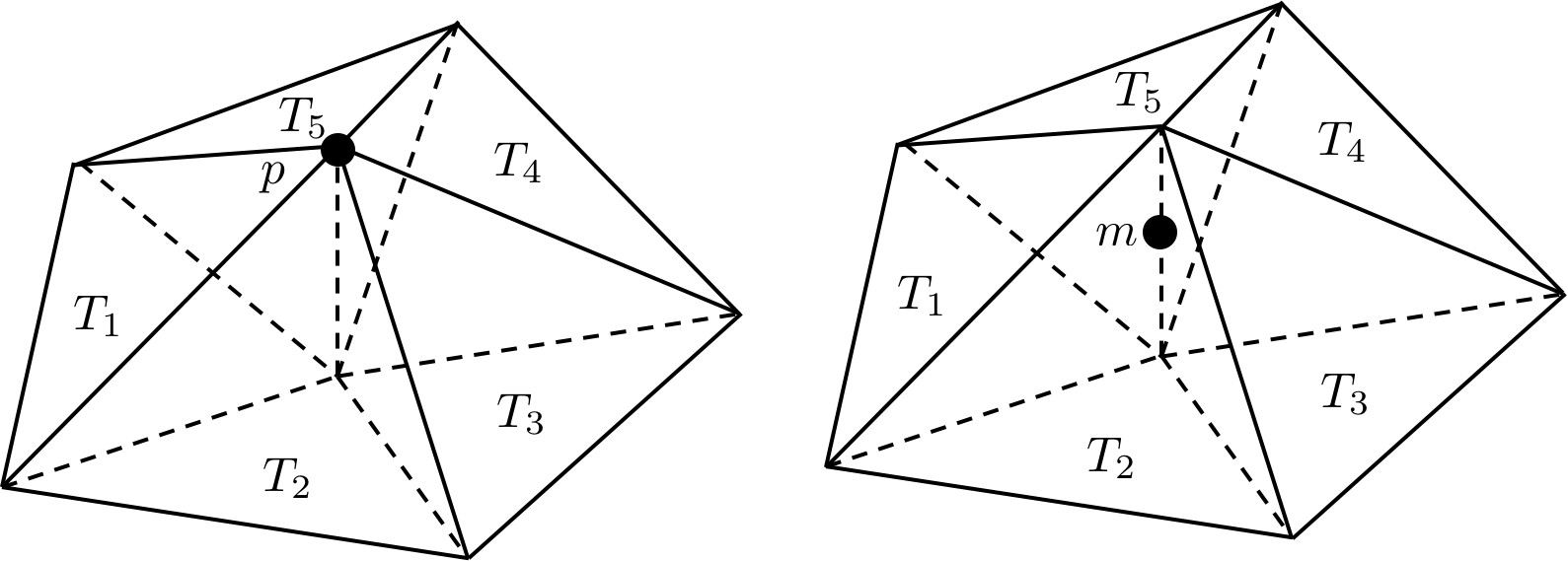}
\caption{A collection of tetrahedra: $T_1, T_2, \ldots, T_5$. The tetrahedra share two common vertices, one of which ($p$) is shown on the left. The tetrahedra also share a single edge midpoint ($m$) which is shown on the right.}
\label{tet_chain_fig}
\end{figure}
In general, we assume that the number of tetrahedra sharing a vertex or midpoint is relatively small, i.e.
\begin{align*}
    | \mathcal{T}_{p} | &\lesssim 1, \qquad \forall p \in \mathcal{V}(\mathcal{T}), \\[1.5ex]
    | \mathcal{T}_{m} | &\lesssim 1, \qquad \forall m \in \mathcal{M}(\mathcal{T}). 
\end{align*}
Here, and throughout the remainder of this section, we say that $A \lesssim B$ when $A \leq c_0 B$, and $c_0$ is a constant do not depend on the mesh. In a natural fashion, we say that $A \lesssim B$ and $B \lesssim A$ implies $A \approx B$. 

In addition, 
the continuous piecewise quadratic vector fields (which reside in $W_{\mathcal{T}})$ can be represented using piecewise quadratic, continuous Galerkin finite elements. These elements can be visualized as shown in figure~\ref{p2_fig}.
\begin{figure}[h!]
\includegraphics[width=5cm]{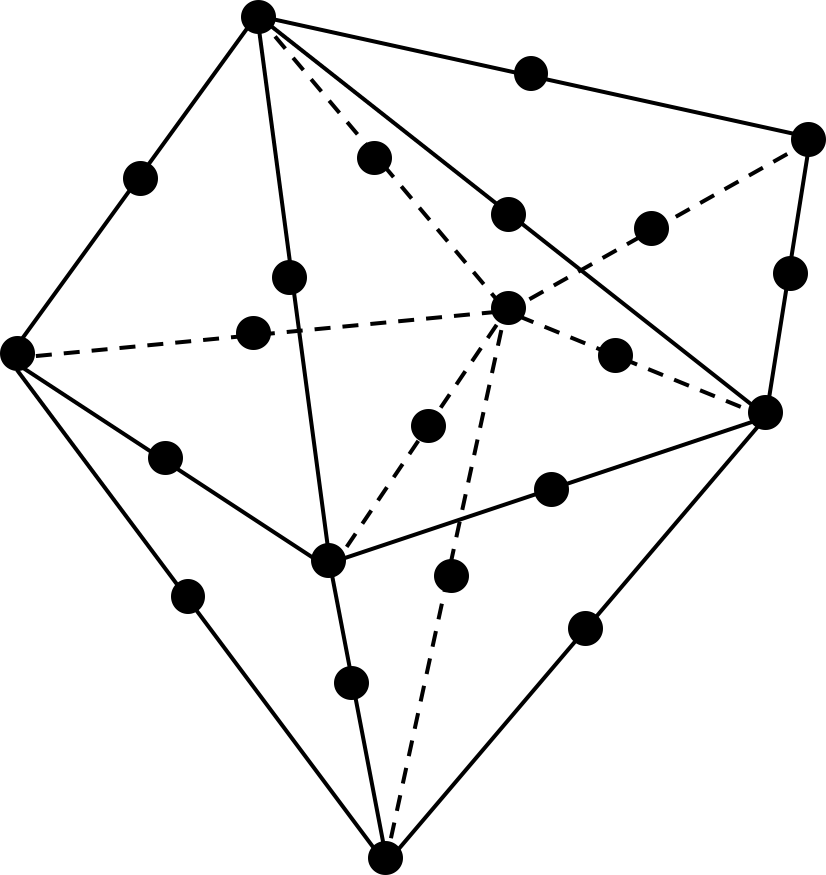}
\caption{A collection of continuous, piecewise quadratic, finite elements. The degrees of freedom for each element are represented using solid dots.}
\label{p2_fig}
\end{figure}

We are now ready to introduce the first lemma of this section.

\begin{lemma}
The pointwise differences between the piecewise vector fields $\bm{v} \in V_{\mathcal{T}}$ and the continuous mapping $E \bm{v} \in W_{\mathcal{T}}$ are given by the following inequalities
\begin{align}
    \left| \bm{v}_{T} (p) - E \bm{v} (p) \right|^2 &\lesssim \sum_{\sigma \in \Xi_p} \left| \llbracket \bm{v} \rrbracket_{\sigma} (p) \right|^2, \qquad \forall T \in \mathcal{T}, \; \forall p \in \mathcal{V} (\mathcal{T}), \label{jump_one} \\[1.5ex]
    \left| \bm{v}_{T} (m) - E \bm{v} (m) \right|^2 &\lesssim \sum_{\sigma \in \Xi_m} \left| \llbracket \bm{v} \rrbracket_{\sigma} (m) \right|^2, \qquad \forall T \in \mathcal{T}, \; \forall m \in \mathcal{M} (\mathcal{T}), \label{jump_two}
\end{align}
where $\Xi_p$ is the set of interior faces sharing $p$ as a common vertex
\begin{align*}
    \Xi_{p} = \left\{ \sigma \in S(\mathcal{T},\Omega) : p \in \partial \sigma \right\},
\end{align*}
and $\Xi_m$ is the set of interior faces sharing $m$ as a common edge midpoint
\begin{align*}
    \Xi_{m} = \left\{ \sigma \in S(\mathcal{T},\Omega) : m \in \partial \sigma \right\}.
\end{align*}
\end{lemma}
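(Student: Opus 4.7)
The plan is to exploit the averaging definition of $E$ together with a telescoping identity along chains of face-adjacent tetrahedra. I will treat the vertex inequality \eqref{jump_one} in detail; the edge-midpoint inequality \eqref{jump_two} follows by the same argument with $p$ replaced by $m$ and with the chains running through the tetrahedra in $\mathcal{T}_m$.

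First, fix $p \in \mathcal{V}(\mathcal{T})$ and $T \in \mathcal{T}_p$. Using \eqref{edef_one} and inserting a telescoping identity, I write
\begin{align*}
    \bm{v}_T(p) - (E\bm{v})(p) = \frac{1}{|\mathcal{T}_p|}\sum_{T' \in \mathcal{T}_p} \bigl(\bm{v}_T(p) - \bm{v}_{T'}(p)\bigr).
\end{align*}
By the squared triangle inequality, together with $|\mathcal{T}_p| \lesssim 1$, it suffices to bound each $|\bm{v}_T(p) - \bm{v}_{T'}(p)|^2$ by the right-hand side of \eqref{jump_one}. Next, for every ordered pair $T,T' \in \mathcal{T}_p$ I construct a chain $T = T_0, T_1, \ldots, T_k = T'$ inside $\mathcal{T}_p$ such that consecutive tetrahedra $T_i, T_{i+1}$ share a common face $\sigma_i$. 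Since any face shared by two tetrahedra is automatically interior and contains $p$, each $\sigma_i$ lies in $\Xi_p$. Telescoping gives
\begin{align*}
    \bm{v}_T(p) - \bm{v}_{T'}(p) = \sum_{i=0}^{k-1}\bigl(\bm{v}_{T_i}(p) - \bm{v}_{T_{i+1}}(p)\bigr) = \sum_{i=0}^{k-1} \pm \llbracket \bm{v}\rrbracket_{\sigma_i}(p),
\end{align*}
where the sign is determined by the ``$\pm$'' orientation of $\sigma_i$ relative to $T_i, T_{i+1}$. Applying the Cauchy--Schwarz inequality and using $k \lesssim 1$, I obtain
\begin{align*}
    \bigl|\bm{v}_T(p) - \bm{v}_{T'}(p)\bigr|^2 \lesssim \sum_{i=0}^{k-1}\bigl|\llbracket \bm{v}\rrbracket_{\sigma_i}(p)\bigr|^2 \lesssim \sum_{\sigma \in \Xi_p}\bigl|\llbracket \bm{v}\rrbracket_\sigma(p)\bigr|^2,
\end{align*}
which, combined with the averaged identity above, proves \eqref{jump_one}.

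The edge-midpoint statement \eqref{jump_two} is proved in the same way: the tetrahedra of $\mathcal{T}_m$ are arranged as a fan (possibly partial, when the edge lies on $\partial\Omega$) of tetrahedra wrapped around the common edge, so the chain of face-adjacencies is in fact linear and every intermediate face trivially belongs to $\Xi_m$ since it shares the edge whose midpoint is $m$. The only nontrivial step is the combinatorial one, namely that for the vertex case the dual graph on $\mathcal{T}_p$ induced by shared faces is connected; this follows from conformity of the partition because the link of $p$ in $\mathcal{T}$ is a (possibly bordered) triangulated $2$-sphere, whose dual graph is connected. Once this connectivity and the bounds $|\mathcal{T}_p|, |\mathcal{T}_m| \lesssim 1$ are available, the remainder of the argument is a short telescoping-plus-Cauchy--Schwarz computation. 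The main obstacle, though modest, is therefore the combinatorial verification that chains of bounded length can always be extracted; the analytical content of the lemma is otherwise transparent.
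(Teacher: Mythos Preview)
Your argument is correct and follows essentially the same route as the paper: express $\bm{v}_T(p)-E\bm{v}(p)$ via the averaging definition, telescope along a chain of face-adjacent tetrahedra sharing $p$ (or $m$), and apply Cauchy--Schwarz (the paper calls it the root-mean-square--arithmetic-mean inequality) together with $|\mathcal{T}_p|,|\mathcal{T}_m|\lesssim 1$. The only cosmetic difference is that the paper cites Brenner~\cite{Brenner04} for \eqref{jump_one} and writes out \eqref{jump_two}, whereas you write out \eqref{jump_one} and deduce \eqref{jump_two}; your explicit discussion of the connectivity of the face-adjacency graph on $\mathcal{T}_p$ is a detail the paper leaves implicit.
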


\begin{proof}
The proof of Eq.~\eqref{jump_one} appears in Lemma 2.1 of~\cite{Brenner04}. It remains for us to prove Eq.~\eqref{jump_two}. We begin by using the definition of $E$ from Eq.~\eqref{edef_two} in order to obtain the following expression
\begin{align}
    \bm{v}_{T} (m) - E\bm{v} (m) = \frac{1}{|\mathcal{T}_m|} \sum_{T' \in \mathcal{T}_m} \left(\bm{v}_{T}(m) - \bm{v}_{T'}(m) \right).
    \label{lemma_1a}
\end{align}
Next, we introduce a sequence of tetrahedra which share the edge midpoint $m$, where $T = T_1$ and $T' = T_k$, and $k > 1$. An example of such a sequence is $T_1, T_2, \ldots, T_{j}, T_{j+1}, \ldots, T_{k-1}, T_k$, where $k \leq |\mathcal{T}_{m}|$. Furthermore, suppose that $T_j$ and $T_{j+1}$ share a common face $\sigma_j \in \Xi_m$. Under these circumstances, the following identity holds 
\begin{align}
    \left| \bm{v}_{T}(m) - \bm{v}_{T'}(m) \right|^{2} &= \left| \sum_{j=1}^{k-1}\left(\bm{v}_{T_j}(m) - \bm{v}_{T_{j+1}}(m) \right) \right|^2 \label{lemma_1b} \\[1.5ex]
    \nonumber & \lesssim \sum_{j=1}^{k-1} \left| \llbracket \bm{v} \rrbracket_{\sigma_j}(m) \right|^2,
\end{align}
where we have used the root-mean-square-arithmetic-mean inequality on the last line. Upon combining Eqs.~\eqref{lemma_1a} and \eqref{lemma_1b}, and using the root-mean-square-arithmetic-mean inequality a second time, we obtain the desired result (Eq.~\eqref{jump_two}).
\end{proof}

\begin{lemma}
Suppose that a seminorm $\bm{\Phi}$ satisfies the following inequalities
\begin{align}
    \bm{\Phi} (\bm{w}) &\leq C_{\bm{\Phi}} \left\| \bm{w} \right\|_{H^1(\Omega, \mathcal{T})}, \qquad \bm{w} \in \left[ H^{1}(\Omega, \mathcal{T}) \right]^{3}, \\[1.5ex]
    \left(\bm{\Phi}(\bm{v} - E \bm{v} )\right)^{2} & \lesssim \sum_{\sigma \in S(\mathcal{T},\Omega)} (\text{diam} \, \sigma) \left( \sum_{p \in \mathcal{V}(\sigma)} \left| \llbracket \bm{v} \rrbracket_{\sigma} (p) \right|^{2} + \sum_{m \in \mathcal{M}(\sigma)} \left| \llbracket \bm{v} \rrbracket_{\sigma} (m) \right|^{2} \right), \quad \forall \bm{v} \in V_{\mathcal{T}}, \label{seminorm_bound}
\end{align}
in conjunction with
\begin{align*}
    \bm{\Phi} (\bm{m}) = 0 \qquad \text{and} \qquad \bm{m} \in \mathbf{CK}(\Omega) \qquad \Longleftrightarrow \qquad \bm{m} = \text{const},
\end{align*}
where $\mathcal{V}(\sigma)$ is the set of all vertices of $\sigma$, and $\mathcal{M}(\sigma)$ is the set of all edge midpoints of $\sigma$. Under these circumstances, the following inequality holds for all $\bm{v} \in V_{\mathcal{T}}$
\begin{align}
    \left| \bm{v} \right|^{2}_{H^{1}(\Omega, \mathcal{T})} & \lesssim \left\| \epsilon_{\mathcal{T}}(\bm{v}) - \frac{1}{3} \text{div}_{\mathcal{T}} (\bm{v}) \mathbb{I} \right\|^{2}_{L_2(\Omega) \times L_2(\Omega)} + \left(\bm{\Phi}(\bm{v})\right)^{2} \label{v_estimate} \\[1.5ex]
    \nonumber &+\sum_{\sigma \in S(\mathcal{T},\Omega)} (\text{diam} \, \sigma) \left( \sum_{p \in \mathcal{V}(\sigma)} \left| \llbracket \bm{v} \rrbracket_{\sigma} (p) \right|^{2} + \sum_{m \in \mathcal{M}(\sigma)} \left| \llbracket \bm{v} \rrbracket_{\sigma} (m) \right|^{2} \right).
\end{align}
\end{lemma}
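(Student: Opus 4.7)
\medskip

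\noindent\textbf{Proof proposal.} The natural plan is to split $\bm{v} = E\bm{v} + (\bm{v} - E\bm{v})$ and treat the two pieces separately. Since $E\bm{v} \in W_{\mathcal{T}} \subset [H^1(\Omega)]^3$, the hypotheses on $\bm{\Phi}$ (the $H^1$-bound specialized to continuous functions, together with the kernel condition on $\mathbf{CK}(\Omega)$) allow us to invoke the continuous generalized Korn's inequality~\eqref{korn_seminorm_cont} on $E\bm{v}$, yielding
\begin{align*}
    |E\bm{v}|_{H^1(\Omega)} \lesssim \left\| \bm{\epsilon}(E\bm{v}) - \tfrac{1}{3}\text{div}(E\bm{v})\mathbb{I} \right\|_{L_2(\Omega)\times L_2(\Omega)} + \bm{\Phi}(E\bm{v}).
\end{align*}
The triangle inequality then reduces the lemma to bounding $|\bm{v} - E\bm{v}|_{H^1(\Omega,\mathcal{T})}$, the trace-free part of $\bm{\epsilon}_{\mathcal{T}}(\bm{v}-E\bm{v})$, and $\bm{\Phi}(E\bm{v})$ by the right-hand side of~\eqref{v_estimate}.

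The key tool for the error $\bm{v}-E\bm{v}$ is a scaling argument on each $T \in \mathcal{T}$. Since $\bm{v}_T - (E\bm{v})|_T$ is a $P_2$ polynomial vector determined by its values at the four vertices and six edge midpoints, a pullback to the reference tetrahedron and equivalence of norms on the finite-dimensional space $[P_2(\hat T)]^3$ give
\begin{align*}
    |\bm{v}_T - E\bm{v}|_{H^1(T)}^2 \lesssim (\text{diam}\, T) \left( \sum_{p \in \mathcal{V}(T)} |\bm{v}_T(p) - E\bm{v}(p)|^2 + \sum_{m \in \mathcal{M}(T)} |\bm{v}_T(m) - E\bm{v}(m)|^2 \right),
\end{align*}
with the exponent of $(\text{diam}\, T)$ determined by the standard change-of-variables computation in three dimensions. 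Substituting the pointwise estimates from Lemma~1 (both~\eqref{jump_one} and~\eqref{jump_two}), summing over $T \in \mathcal{T}$, and reorganizing the resulting double sum by interior faces (using the shape-regularity assumptions $|\mathcal{T}_p|,|\mathcal{T}_m|\lesssim 1$ and comparability of diameters of adjacent tetrahedra) produces
\begin{align*}
    |\bm{v} - E\bm{v}|_{H^1(\Omega,\mathcal{T})}^2 \lesssim \sum_{\sigma \in S(\mathcal{T},\Omega)} (\text{diam}\,\sigma) \left( \sum_{p \in \mathcal{V}(\sigma)} |\llbracket \bm{v} \rrbracket_\sigma (p)|^2 + \sum_{m \in \mathcal{M}(\sigma)} |\llbracket \bm{v} \rrbracket_\sigma (m)|^2 \right).
\end{align*}
The same bound controls $\|\bm{\epsilon}_{\mathcal{T}}(\bm{v}-E\bm{v}) - \tfrac{1}{3}\text{div}_{\mathcal{T}}(\bm{v}-E\bm{v})\mathbb{I}\|_{L_2(\Omega)\times L_2(\Omega)}^2$, since this quantity is dominated by $|\bm{v}-E\bm{v}|_{H^1(\Omega,\mathcal{T})}^2$.

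To close the argument, I would use the seminorm triangle inequality $\bm{\Phi}(E\bm{v}) \leq \bm{\Phi}(\bm{v}) + \bm{\Phi}(\bm{v} - E\bm{v})$ and invoke the assumed bound~\eqref{seminorm_bound} on $\bm{\Phi}(\bm{v}-E\bm{v})$. Combining everything, squaring the triangle inequality $|\bm{v}|_{H^1(\Omega,\mathcal{T})} \leq |E\bm{v}|_{H^1(\Omega)} + |\bm{v}-E\bm{v}|_{H^1(\Omega,\mathcal{T})}$ with $(a+b)^2 \leq 2a^2 + 2b^2$, and collecting terms gives exactly~\eqref{v_estimate}.

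The main obstacle I anticipate is tracking the exponent of $(\text{diam}\,\sigma)$ through the scaling step: it has to come out to $+1$ rather than $-1$ in order to match the right-hand side of~\eqref{v_estimate} (note the contrast with~\eqref{korn_seminorm_disc}, where a $(\text{diam}\,\sigma)^{-1}$ appears because there the jumps are measured in $L_2(\sigma)$ instead of at nodes). A careful bookkeeping of the powers of $h$ coming from the reference-element map for both the $H^1(T)$ seminorm and the finite set of nodal evaluations is what makes the estimate balance, and it is the only place where the dimension $d=3$ enters explicitly. Everything else is routine triangle-inequality manipulation combined with the already-stated results~\eqref{korn_seminorm_cont}, Lemma~1, and the hypothesis~\eqref{seminorm_bound}.
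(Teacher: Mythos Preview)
Your proposal is correct and follows essentially the same route as the paper: split $\bm{v}=E\bm{v}+(\bm{v}-E\bm{v})$, apply the continuous generalized Korn inequality~\eqref{korn_seminorm_cont} to $E\bm{v}$, bound $|\bm{v}-E\bm{v}|_{H^1(\Omega,\mathcal{T})}^2$ by the jump sum via the elementwise scaling estimate together with Lemma~1, control the trace-free symmetric gradient of $\bm{v}-E\bm{v}$ by its $H^1$ seminorm (the paper invokes Lemma~\ref{grad_lemma} for this), and handle $\bm{\Phi}(E\bm{v})$ by the triangle inequality plus hypothesis~\eqref{seminorm_bound}. Your scaling concern is resolved exactly as you anticipated: in three dimensions the $H^1$ seminorm of a $P_2$ function on $T$ picks up a factor $(\text{diam}\,T)^{d-2}=(\text{diam}\,T)^1$ relative to the nodal values, matching the paper's Eq.~\eqref{h1_estimate}.
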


\begin{proof}
    We begin by assuming that the diameter of each triangle is comparable in size to the diameters of the nearby faces 
    \begin{align}
        \text{diam} \, T \approx \text{diam} \, \sigma, \qquad &\forall T \in \mathcal{T}_{p}, \; \sigma \in \Xi_{p}, \; p \in \mathcal{V}(\mathcal{T}), \label{diam_criterion} \\[1.5ex]
        \nonumber & \forall  T \in \mathcal{T}_{m}, \; \sigma \in \Xi_{m}, \; m \in \mathcal{M}(\mathcal{T}).
    \end{align}
    Next, we develop the following piecewise $H^1$ estimate by using standard $L_{\infty}$ error estimates (cf.~\cite{brenner2008mathematical}, Chapter 4), in conjunction with Eqs.~\eqref{jump_one}, \eqref{jump_two}, and \eqref{diam_criterion}
    \begin{align}
        \left| \bm{v} - E \bm{v} \right|^{2}_{H^{1}(\Omega, \mathcal{T})} &\lesssim \sum_{T \in \mathcal{T}} \left(\text{diam} \, T\right) \left( \sum_{p \in \mathcal{V}(T)} \left| \bm{v}_{T} (p) - E\bm{v}(p) \right|^{2} + \sum_{m \in \mathcal{M}(T)} \left| \bm{v}_{T} (m) - E\bm{v}(m) \right|^{2} \right) \label{h1_estimate} \\[1.5ex]
        \nonumber & \lesssim \sum_{\sigma \in S(\mathcal{T},\Omega)} (\text{diam} \, \sigma) \left( \sum_{p \in \mathcal{V}(\sigma)} \left| \llbracket \bm{v} \rrbracket_{\sigma} (p) \right|^{2} + \sum_{m \in \mathcal{M}(\sigma)} \left| \llbracket \bm{v} \rrbracket_{\sigma} (m) \right|^{2} \right).
    \end{align}
    In addition, we develop the following $H^1$ estimate based on the triangle inequality, root-mean-square-arithmetic-mean inequality, Eqs.~\eqref{korn_seminorm_cont}, \eqref{seminorm_bound}, and \eqref{h1_estimate}, and Eq.~\eqref{grad_bound} from Lemma~\ref{grad_lemma} in Appendix A
    \begin{align*}
        \left| \bm{v} \right|^{2}_{H^{1}(\Omega,\mathcal{T})} &\leq \left| E \bm{v} \right|^{2}_{H^{1}(\Omega)} + \left| \bm{v} - E \bm{v} \right|^{2}_{H^{1}(\Omega, \mathcal{T})} \\[1.5ex]
        & \lesssim \left\| \bm{\epsilon}(E\bm{v}) - \frac{1}{3} \text{div} (E\bm{v}) \mathbb{I} \right\|^{2}_{L_2(\Omega) \times L_2(\Omega)} + \left( \bm{\Phi}(E\bm{v}) \right)^{2} + \left| \bm{v} - E \bm{v} \right|^{2}_{H^{1}(\Omega, \mathcal{T})} \\[1.5ex]
        &\lesssim \left\| \bm{\epsilon}_{\mathcal{T}}(\bm{v}) - \frac{1}{3} \text{div}_{\mathcal{T}} (\bm{v}) \mathbb{I} \right\|^{2}_{L_2(\Omega) \times L_2(\Omega)} + \left\| \bm{\epsilon}_{\mathcal{T}}(\bm{v} - E\bm{v}) - \frac{1}{3} \text{div}_{\mathcal{T}} (\bm{v} - E\bm{v}) \mathbb{I} \right\|^{2}_{L_2(\Omega) \times L_2(\Omega)} \\[1.5ex]
        &+ \left( \bm{\Phi}(\bm{v}) \right)^{2} + \left( \bm{\Phi}(\bm{v} - E\bm{v}) \right)^{2} + \left| \bm{v} - E \bm{v} \right|^{2}_{H^{1}(\Omega, \mathcal{T})} \\[1.5ex]
        &\lesssim \left\| \bm{\epsilon}_{\mathcal{T}}(\bm{v}) - \frac{1}{3} \text{div}_{\mathcal{T}} (\bm{v}) \mathbb{I} \right\|^{2}_{L_2(\Omega) \times L_2(\Omega)} + \left( \bm{\Phi}(\bm{v}) \right)^{2} + \left( \bm{\Phi}(\bm{v} - E\bm{v}) \right)^{2} + \left| \bm{v} - E \bm{v} \right|^{2}_{H^{1}(\Omega, \mathcal{T})} \\[1.5ex]
        & \lesssim \left\| \bm{\epsilon}_{\mathcal{T}}(\bm{v}) - \frac{1}{3} \text{div}_{\mathcal{T}} (\bm{v}) \mathbb{I} \right\|^{2}_{L_2(\Omega) \times L_2(\Omega)} + \left( \bm{\Phi}(\bm{v}) \right)^{2} \\[1.5ex]
        & + \sum_{\sigma \in S(\mathcal{T},\Omega)} (\text{diam} \, \sigma) \left( \sum_{p \in \mathcal{V}(\sigma)} \left| \llbracket \bm{v} \rrbracket_{\sigma} (p) \right|^{2} + \sum_{m \in \mathcal{M}(\sigma)} \left| \llbracket \bm{v} \rrbracket_{\sigma} (m) \right|^{2} \right).
    \end{align*}
    The last line contains the desired result (Eq.~\eqref{v_estimate}). 
\end{proof}

In the following examples, we will demonstrate that the inequality in Eq.~\eqref{seminorm_bound} holds for the seminorms in Eqs.~\eqref{seminorm_def_one} and \eqref{seminorm_def_two}.

\begin{example}
    For the seminorm in Eq.~\eqref{seminorm_def_one}, we have that
    \begin{align*}
        \left(\bm{\Phi}_{1}(\bm{v}-E\bm{v})\right)^{2} & = \left\| G(\bm{v} - E \bm{v}) \right\|_{L_2(\Omega)}^{2} \\[1.5ex]
        &\leq C \left\| \bm{v} - E \bm{v} \right\|_{L_2(\Omega)}^{2} \\[1.5ex]
        & \lesssim \sum_{T \in \mathcal{T}} \left(\text{diam} \, T \right)^{3} \left( \sum_{p\in\mathcal{V}(T)} \left| \bm{v}_{T}(p) - E \bm{v}(p) \right|^{2} + \sum_{m\in\mathcal{M}(T)} \left| \bm{v}_{T}(m) - E \bm{v}(m) \right|^{2} \right) \\[1.5ex]
        &\lesssim \sum_{\sigma \in S(\mathcal{T},\Omega)} (\text{diam} \, \sigma)^{3} \left( \sum_{p \in \mathcal{V}(\sigma)} \left| \llbracket \bm{v} \rrbracket_{\sigma} (p) \right|^{2} + \sum_{m \in \mathcal{M}(\sigma)} \left| \llbracket \bm{v} \rrbracket_{\sigma} (m) \right|^{2} \right),
    \end{align*}
    where we have used the boundedness of the projection operator (Eq.~\eqref{ortho_bound}), a standard $L_{\infty}$ estimate~\cite{brenner2008mathematical}, and Eqs.~\eqref{jump_one}, \eqref{jump_two}, and \eqref{diam_criterion}. 
\end{example}

\begin{example}
    For the seminorm in Eq.~\eqref{seminorm_def_two}, we have that
    \begin{align*}
        \left(\bm{\Phi}_{2}(\bm{v}-E\bm{v})\right)^{2} &= \sup_{ \substack{\bm{m} \in \mathbf{CK}(\Omega) \\
        \left\| \bm{m} \right\|_{L_2(\partial \Omega)} = 1, \int_{\partial \Omega} \bm{m} \, ds = \bm{0}}} \left( \int_{\partial \Omega} \left( \bm{v} - E \bm{v} \right) \cdot \bm{m} \, ds \right)^{2} \\[1.5ex]
        &\leq C \left\| \bm{v} - E \bm{v} \right\|_{L_2(\partial\Omega)}^{2}
        \\[1.5ex]
        & \lesssim \sum_{\substack{T \in \mathcal{T} \\ \partial T \cap \partial \Omega \neq \emptyset}} \left(\text{diam} \, T \right)^{2} \left( \sum_{p\in\mathcal{V}(T)} \left| \bm{v}_{T}(p) - E \bm{v}(p) \right|^{2} + \sum_{m\in\mathcal{M}(T)} \left| \bm{v}_{T}(m) - E \bm{v}(m) \right|^{2} \right) \\[1.5ex]
        &\lesssim \sum_{\sigma \in S(\mathcal{T},\Omega)} (\text{diam} \, \sigma)^{2} \left( \sum_{p \in \mathcal{V}(\sigma)} \left| \llbracket \bm{v} \rrbracket_{\sigma} (p) \right|^{2} + \sum_{m \in \mathcal{M}(\sigma)} \left| \llbracket \bm{v} \rrbracket_{\sigma} (m) \right|^{2} \right),
    \end{align*}
    where we have used H{\"o}lder's inequality, a standard $L_{\infty}$ estimate~\cite{brenner2008mathematical}, a discrete trace inequality, a root-mean-square-arithmetic-mean inequality, and Eqs.~\eqref{jump_one}, \eqref{jump_two}, and \eqref{diam_criterion}. 
\end{example}

In order to provide an appropriate context for our discussion of the final theorem, we will introduce the following interpolation operator for all $\bm{v} \in \left[H^{2}(T)\right]^{3}$
\begin{align}
    &\left| \int_{T} \left(\bm{v} - \Pi_{T} \bm{v} \right) dx \right| = 0, \qquad \left| \int_{T} \text{div} \left(\bm{v} - \Pi_{T} \bm{v} \right) dx \right| = 0, \label{interp_def} \\[1.5ex]
    \nonumber &\left| \int_{T} \text{curl} \left( \bm{v} - \Pi_{T} \bm{v} \right) dx \right| = 0, \qquad \left| \int_{T} \text{curl} \left( \text{curl} \left( \bm{v} - \Pi_{T} \bm{v} \right) \right) dx \right| = 0.
\end{align}
This interpolation operator is noteworthy because it projects $\bm{v}$ on to the kernel space $\mathbf{CK} (T)$. In fact, it can be shown that for all $\bm{m} \in \mathbf{CK}(T)$
\begin{align*}
    & \left| \int_{T} \bm{m} \, dx \right| = 0, \qquad \left| \int_{T} \text{div} (\bm{m}) \, dx \right| = 0, \\[1.5ex]
    &\left| \int_{T} \text{curl}(\bm{m}) \, dx \right| = 0, \qquad \left| \int_{T} \text{curl}(\text{curl} (\bm{m})) \, dx \right| = 0, \qquad \Longleftrightarrow \qquad \bm{m} = \bm{0}.
\end{align*}
The proof of the above statement appears in Appendix B, Lemma~\ref{projection_lemma}. Using the element-wise definition of the interpolation operator above, we can construct a global interpolation operator
\begin{align*}
    &\Pi : \left[H^{2}(\Omega,\mathcal{T})\right]^{3} \rightarrow V_{\mathcal{T}}, \\[1.5ex]
    & \left(\Pi \bm{v}\right) |_{T} = \Pi_{T} \bm{v}_{T}, \qquad \forall T \in \mathcal{T}. 
\end{align*}
In addition, upon combining Eq.~\eqref{interp_def} with Eq.~\eqref{local_korn}, (the latter of which is derived in Appendix C, Theorem~\ref{local_korn_lemma}), we obtain
\begin{align}
    \left| \bm{v} - \Pi_{T} \bm{v} \right|_{H^{1}(T)} &\lesssim \left\| \bm{\epsilon} (\bm{v} - \Pi_{T} \bm{v}) - \frac{1}{3} \text{div}(\bm{v} - \Pi_{T} \bm{v}) \mathbb{I} \right\|_{L_2(T) \times L_2(T)} \label{project_error_one} \\[1.5ex]
    \nonumber & = \left\| \bm{\epsilon} (\bm{v}) - \frac{1}{3} \text{div}(\bm{v}) \mathbb{I} \right\|_{L_2(T) \times L_2(T)},
\end{align}
for all $T \in \mathcal{T}$ and $\bm{v} \in \left[H^{2}(\Omega,\mathcal{T})\right]^{3}$. Finally, by a standard Poincar{\'e}-type inequality, we have that
\begin{align}
    \left\| \bm{v} - \Pi_{T} \bm{v} \right\|_{L_2(T)} \lesssim \left(\text{diam} \, T \right) \left| \bm{v} - \Pi_{T} \bm{v} \right|_{H^{1}(T)}, \label{project_error_two}
\end{align}
for all $T \in \mathcal{T}$ and $\bm{v} \in \left[H^{1}(\Omega,\mathcal{T})\right]^{3}$. We are now ready to state the final result.

\begin{theorem}
    Suppose that a seminorm $\bm{\Phi}: \left[H^{1}(\Omega, \mathcal{T})\right]^{3} \rightarrow \mathbb{R}$ satisfies the following condition
    \begin{align}
        \bm{\Phi} \left(\bm{v} - \Pi \bm{v} \right) \lesssim \left\| \bm{\epsilon}_{\mathcal{T}} (\bm{v}) - \frac{1}{3} \text{div}_{\mathcal{T}}(\bm{v}) \mathbb{I} \right\|_{L_2(\Omega) \times L_2(\Omega)}, \qquad \forall \bm{v} \in \left[ H^{2}(\Omega, \mathcal{T}) \right]^{3}, \label{seminorm_proj_bound}
    \end{align}
    where $\Pi : \left[ H^{2}(\Omega, \mathcal{T}) \right]^{3} \rightarrow V_{\mathcal{T}}$. Under these circumstances, the following inequality holds
    \begin{align}
        \left| \bm{v} \right|^{2}_{H^{1}(\Omega,\mathcal{T})} \leq \kappa\left(\theta_{\mathcal{T}} \right) \Bigg( &\left\| \bm{\epsilon}_{\mathcal{T}} (\bm{v}) - \frac{1}{3} \text{div}_{\mathcal{T}}(\bm{v}) \mathbb{I} \right\|_{L_2(\Omega) \times L_2(\Omega)}^{2} + \left(\bm{\Phi}(\bm{v})\right)^{2} \label{final_result} \\[1.5ex]
        \nonumber &+ \sum_{\sigma \in S(\mathcal{T}, \Omega)} \left(\text{diam} \, \sigma \right)^{-1} \left\| \pi_{\sigma} \llbracket \bm{v} \rrbracket_{\sigma} \right\|^{2}_{L_2(\sigma)} \Bigg), \qquad \forall \bm{v} \in \left[H^{2}(\Omega, \mathcal{T})\right]^{3},
    \end{align}
    where $\kappa: \mathbb{R}^{+} \rightarrow \mathbb{R}^{+}$ is a continuous function, which does not depend on $\mathcal{T}$. 
\end{theorem}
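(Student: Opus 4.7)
The plan is to reduce the $H^{2}$-level statement to Lemma 4.2 by means of the elementwise interpolant $\Pi : [H^{2}(\Omega,\mathcal{T})]^{3} \to V_{\mathcal{T}}$ introduced just before the theorem. Concretely, I would decompose $\bm{v} = \Pi\bm{v} + (\bm{v} - \Pi\bm{v})$, apply the triangle and root-mean-square-arithmetic-mean inequalities to obtain $|\bm{v}|^{2}_{H^{1}(\Omega,\mathcal{T})} \lesssim |\Pi\bm{v}|^{2}_{H^{1}(\Omega,\mathcal{T})} + |\bm{v} - \Pi\bm{v}|^{2}_{H^{1}(\Omega,\mathcal{T})}$, and control the two pieces separately. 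The point of the splitting is that $\Pi\bm{v}$ lies in $V_{\mathcal{T}}$, which is precisely the space to which Lemma 4.2 applies, while the residual $\bm{v}-\Pi\bm{v}$ is controlled elementwise by the local continuous Korn estimate (4.16).

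The residual is the easy piece: summing (4.16) over $T \in \mathcal{T}$ yields $|\bm{v} - \Pi\bm{v}|^{2}_{H^{1}(\Omega,\mathcal{T})} \lesssim \|\bm{\epsilon}_{\mathcal{T}}(\bm{v}) - \frac{1}{3}\text{div}_{\mathcal{T}}(\bm{v})\mathbb{I}\|^{2}_{L_{2}(\Omega)\times L_{2}(\Omega)}$, which is already the first term on the right-hand side of (4.20). Applying Lemma 4.2 to $\Pi\bm{v}$ produces three contributions that must then be re-expressed in terms of $\bm{v}$. The trace-free symmetric gradient of $\Pi\bm{v}$ is handled by writing $\Pi\bm{v} = \bm{v} - (\bm{v} - \Pi\bm{v})$, applying the triangle inequality, and reusing (4.16). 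The seminorm contribution is handled identically via subadditivity of $\bm{\Phi}$, invoking the hypothesis (4.19) to absorb $(\bm{\Phi}(\bm{v} - \Pi\bm{v}))^{2}$ into the trace-free symmetric gradient term.

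The genuine obstacle is the jump contribution. Lemma 4.2 provides nodal sums of $\llbracket \Pi\bm{v} \rrbracket_{\sigma}$ weighted by $\text{diam}\,\sigma$, whereas (4.20) asks for the $L_{2}$-norm of $\pi_{\sigma}\llbracket \bm{v} \rrbracket_{\sigma}$ weighted by $(\text{diam}\,\sigma)^{-1}$. The bridge is that $\llbracket \Pi\bm{v} \rrbracket_{\sigma}$ is a $P_{2}$ polynomial on each face $\sigma$, so a scaling argument on the reference triangle furnishes the equivalence of the nodal-value sum with $(\text{diam}\,\sigma)^{-2}\|\llbracket\Pi\bm{v}\rrbracket_{\sigma}\|^{2}_{L_{2}(\sigma)}$, and hence of the full face contribution with $(\text{diam}\,\sigma)^{-1}\|\llbracket\Pi\bm{v}\rrbracket_{\sigma}\|^{2}_{L_{2}(\sigma)}$, with constants depending only on shape regularity. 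Because $\pi_{\sigma}$ acts as the identity on $[P_{2}(\sigma)]^{3}$, I would then write $\llbracket \Pi\bm{v} \rrbracket_{\sigma} = \pi_{\sigma}\llbracket\bm{v}\rrbracket_{\sigma} - \pi_{\sigma}\llbracket \bm{v} - \Pi\bm{v} \rrbracket_{\sigma}$, take $L_{2}(\sigma)$-norms, and apply the triangle inequality. The first summand is exactly the target quantity; the second, after using (3.9) to drop $\pi_{\sigma}$, is bounded by a scaled trace inequality on the two tetrahedra abutting $\sigma$ combined with (4.16)--(4.17), and after summation over $\sigma$ is proportional to $\|\bm{\epsilon}_{\mathcal{T}}(\bm{v}) - \frac{1}{3}\text{div}_{\mathcal{T}}(\bm{v})\mathbb{I}\|^{2}_{L_{2}(\Omega)\times L_{2}(\Omega)}$.

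The main difficulty is the careful bookkeeping of the shape-regularity constants. The $P_{2}(\sigma)$ nodal-$L_{2}$ equivalence, the scaled trace inequality, the standard $L_{\infty}$ estimates used inside Lemma 4.2, and the local Korn estimate (4.16) each contribute a constant controlled by $\theta_{\mathcal{T}}$, and these must be assembled into a single continuous function $\kappa(\theta_{\mathcal{T}})$ that is independent of $\mathcal{T}$. Once this bookkeeping is done and one uses that each interior face is shared by at most two tetrahedra to localize the summation over $\sigma \in S(\mathcal{T},\Omega)$, the combined bounds produce the desired inequality (4.20).
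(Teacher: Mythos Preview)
Your proposal is correct and follows essentially the same route as the paper: split $\bm{v}=\Pi\bm{v}+(\bm{v}-\Pi\bm{v})$, bound the residual via (4.16), apply Lemma~4.2 to $\Pi\bm{v}\in V_{\mathcal T}$, and then re-express each of the three resulting terms in terms of $\bm{v}$ using (4.19), the $P_2$ nodal--$L_2$ equivalence together with $\pi_\sigma|_{[P_2(\sigma)]^3}=\mathrm{id}$, and the scaled trace inequality with (4.16)--(4.17). The only cosmetic difference is that the paper disposes of $\|\bm{\epsilon}_{\mathcal T}(\Pi\bm{v})-\tfrac13\,\mathrm{div}_{\mathcal T}(\Pi\bm{v})\,\mathbb{I}\|$ by invoking the kernel condition (2.4) directly (this term is identically zero since $\Pi_T\bm{v}\in\mathbf{CK}(T)$), whereas you bound it via the triangle inequality and (4.16); both arrive at the same estimate.
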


\begin{proof}
    We start by constructing the following $H^1$ estimate using the triangle inequality, the root-mean-square-arithmetic-mean inequality, Eq.~\eqref{kernel_condition}, Eq.~\eqref{v_estimate}, and Eq.~\eqref{project_error_one}
    \begin{align}
        \left| \bm{v} \right|_{H^{1}(\Omega, \mathcal{T})}^{2} &\leq \left| \bm{v} - \Pi \bm{v} \right|^{2}_{H^{1}(\Omega,\mathcal{T})} + \left| \Pi \bm{v} \right|^{2}_{H^{1}(\Omega, \mathcal{T})} \label{theorem_a} \\[1.5ex]
        \nonumber &\lesssim \left\| \bm{\epsilon}_{\mathcal{T}} (\bm{v}) - \frac{1}{3} \text{div}_{\mathcal{T}}(\bm{v}) \mathbb{I} \right\|_{L_2(\Omega) \times L_2(\Omega)}^{2} + \left(\bm{\Phi}(\Pi \bm{v}) \right)^{2} \\[1.5ex]
        \nonumber &+\sum_{\sigma \in S(\mathcal{T},\Omega)} (\text{diam} \, \sigma) \left( \sum_{p \in \mathcal{V}(\sigma)} \left| \llbracket \Pi \bm{v} \rrbracket_{\sigma} (p) \right|^{2} + \sum_{m \in \mathcal{M}(\sigma)} \left| \llbracket \Pi \bm{v} \rrbracket_{\sigma} (m) \right|^{2} \right).        
    \end{align}
    In addition, we can use the triangle inequality and Eq.~\eqref{seminorm_proj_bound} to obtain the following estimate
    \begin{align}
        \bm{\Phi}(\Pi \bm{v}) & \leq \bm{\Phi}(\bm{v} - \Pi \bm{v}) + \bm{\Phi}(\bm{v}) \label{theorem_b} \\[1.5ex]
        \nonumber &\lesssim \left\| \bm{\epsilon}_{\mathcal{T}} (\bm{v}) - \frac{1}{3} \text{div}_{\mathcal{T}}(\bm{v}) \mathbb{I} \right\|_{L_2(\Omega) \times L_2(\Omega)} + \bm{\Phi}(\bm{v}).
    \end{align}
    It remains for us to construct an upper bound for the jump terms on the RHS of Eq.~\eqref{theorem_a}. Towards this end, we re-introduce the quadratic projection operator
    \begin{align}
        \nonumber &\pi_{\sigma} : \left[L_{2}(\sigma)\right]^{3} \rightarrow \left[P_2(\sigma)\right]^{3}, \qquad \forall \sigma \in S(\mathcal{T}, \Omega).
    \end{align}
    Then, by the triangle inequality, the root-mean-square-arithmetic-mean inequality, the discrete trace inequality, and standard inverse estimates (see~\cite{brenner2008mathematical}, Chapter 4), we obtain
    \begin{align}
        \left| \llbracket \Pi \bm{v} \rrbracket_{\sigma} (p) \right|^{2} &= \left| \llbracket \Pi \bm{v} - \bm{v} + \bm{v} \rrbracket_{\sigma} (p) \right|^{2} \label{project_p} \\[1.5ex]
        \nonumber &= \left| \pi_{\sigma} \llbracket \Pi \bm{v} - \bm{v} + \bm{v} \rrbracket_{\sigma}(p) \right|^{2} \\[1.5ex]
        \nonumber &\lesssim \left| \pi_{\sigma} \llbracket \bm{v} - \Pi \bm{v} \rrbracket_{\sigma} (p) \right|^{2} + \left| \pi_{\sigma} \llbracket \bm{v} \rrbracket_{\sigma}(p) \right|^{2} \\[1.5ex]
        \nonumber &\lesssim \left(\text{diam} \, \sigma\right)^{-2} \left( \left\| \pi_{\sigma} \llbracket \bm{v} - \Pi \bm{v} \rrbracket_{\sigma} \right\|_{L_2(\sigma)}^{2} + \left\| \pi_{\sigma} \llbracket \bm{v} \rrbracket_{\sigma} \right\|_{L_2(\sigma)}^{2} \right).
    \end{align}
    Following a similar argument, we have that
    \begin{align}
        \left| \llbracket \Pi \bm{v} \rrbracket_{\sigma} (m) \right|^{2} \lesssim \left(\text{diam} \, \sigma\right)^{-2} \left( \left\| \pi_{\sigma} \llbracket \bm{v} - \Pi \bm{v} \rrbracket_{\sigma} \right\|_{L_2(\sigma)}^{2} + \left\| \pi_{\sigma} \llbracket \bm{v} \rrbracket_{\sigma} \right\|_{L_2(\sigma)}^{2} \right). \label{project_m}
    \end{align}
    Next, we can construct the following $L_2$ estimate by using the discrete trace inequality in conjunction with Eqs.~\eqref{project_error_one}, \eqref{project_error_two}, and \eqref{proj_ineq}
    \begin{align}
        \left\| \pi_{\sigma} \llbracket \bm{v} - \Pi \bm{v} \rrbracket_{\sigma} \right\|_{L_2(\sigma)}^{2} &\leq C \left\| \llbracket \bm{v} - \Pi \bm{v} \rrbracket_{\sigma} \right\|_{L_2(\sigma)}^{2} \label{theorem_c} \\[1.5ex]
        \nonumber & \lesssim \sum_{T \in \mathcal{T}_{\sigma}} \Bigg( \left(\text{diam} \, T \right) \left| \bm{v}_{T} - \Pi_{T} \bm{v}_{T} \right|_{H^{1}(T)}^{2} \\[1.5ex]
        \nonumber &+ \left(\text{diam} \, T\right)^{-1} \left\| \bm{v}_{T} - \Pi_{T} \bm{v}_{T} \right\|_{L_{2}(T)}^{2} \Bigg) \\[1.5ex]
        \nonumber &\lesssim \sum_{T \in \mathcal{T}_{\sigma}} \left(\text{diam} \, T \right) \left\| \bm{\epsilon}(\bm{v}_{T}) -\frac{1}{3} \text{div} (\bm{v}_{T}) \mathbb{I} \right\|_{L_2(T) \times L_2(T)}^{2},
    \end{align}
    where $\mathcal{T}_{\sigma}$ is the set of tetrahedra sharing the common side $\sigma$. Now, upon combining Eqs.~\eqref{diam_criterion}, \eqref{project_p}, \eqref{project_m}, and \eqref{theorem_c}, we obtain
    \begin{align}
        &\sum_{\sigma \in S(\mathcal{T},\Omega)} (\text{diam} \, \sigma) \left( \sum_{p \in \mathcal{V}(\sigma)} \left| \llbracket \Pi \bm{v} \rrbracket_{\sigma} (p) \right|^{2} + \sum_{m \in \mathcal{M}(\sigma)} \left| \llbracket \Pi \bm{v} \rrbracket_{\sigma} (m) \right|^{2} \right) \label{theorem_d} \\[1.5ex] 
        \nonumber & \lesssim \sum_{\sigma \in S(\mathcal{T},\Omega)} (\text{diam} \, \sigma)^{-1} \left\|\pi_{\sigma} \llbracket \bm{v} \rrbracket_{\sigma} \right\|_{L_2(\sigma)}^{2} + \left\| \bm{\epsilon}_{\mathcal{T}} (\bm{v}) - \frac{1}{3} \text{div}_{\mathcal{T}}(\bm{v}) \mathbb{I} \right\|_{L_2(\Omega) \times L_2(\Omega)}^{2}.
    \end{align}
    Lastly, on combining Eqs.~\eqref{theorem_a}, \eqref{theorem_b}, and \eqref{theorem_d} we obtain
    \begin{align*}
        \left| \bm{v} \right|_{H^{1}(\Omega, \mathcal{T})}^{2} &\lesssim \left\| \bm{\epsilon}_{\mathcal{T}} (\bm{v}) - \frac{1}{3} \text{div}_{\mathcal{T}}(\bm{v}) \mathbb{I} \right\|_{L_2(\Omega) \times L_2(\Omega)}^{2} + \left( \bm{\Phi} (\bm{v}) \right)^{2} \\[1.5ex]
        &+ \sum_{\sigma \in S(\mathcal{T},\Omega)} (\text{diam} \, \sigma)^{-1} \left\|\pi_{\sigma} \llbracket \bm{v} \rrbracket_{\sigma} \right\|_{L_2(\sigma)}^{2}.
    \end{align*}
    This equation is identical to the desired result (Eq.~\eqref{final_result}).
\end{proof}

In the following examples, we will demonstrate that the inequality in Eq.~\eqref{seminorm_proj_bound} holds for the seminorms in Eqs.~\eqref{seminorm_def_one} and \eqref{seminorm_def_two}.

\begin{example}
    For the seminorm in Eq.~\eqref{seminorm_def_one}, we have that
    \begin{align*}
    \left(\bm{\Phi}_{1}(\bm{v} - \Pi \bm{v} )\right)^{2} & = \sum_{T \in \mathcal{T}} \left\|G(\bm{v}_{T} - \Pi_{T} \bm{v}_{T}) \right\|_{L_{2}(T)}^{2} \\[1.5ex]
    &\leq C \sum_{T \in \mathcal{T}} \left\|\bm{v}_{T} - \Pi_{T} \bm{v}_{T} \right\|_{L_{2}(T)}^{2} \\[1.5ex]
    &\lesssim \sum_{T \in \mathcal{T}} \left(\text{diam} \, T\right)^{2} \left\| \bm{\epsilon} (\bm{v}_{T}) - \frac{1}{3} \text{div}(\bm{v}_{T}) \mathbb{I} \right\|_{L_2(T) \times L_2(T)}^{2} \\[1.5ex]
    &\lesssim \left\| \bm{\epsilon}_{\mathcal{T}} (\bm{v}) - \frac{1}{3} \text{div}_{\mathcal{T}} (\bm{v}) \mathbb{I} \right\|_{L_2(\Omega) \times L_2(\Omega)}^{2},
    \end{align*}
    where we have used Eqs.~\eqref{ortho_bound}, \eqref{project_error_one}, and \eqref{project_error_two}. 
\end{example}

\begin{example}
    For the seminorm in Eq.~\eqref{seminorm_def_two}, we have that
    \begin{align*}
    \left(\bm{\Phi}_{2}(\bm{v} - \Pi \bm{v} )\right)^{2} &= \sup_{ \substack{\bm{m} \in \mathbf{CK}(\Omega) \\
        \left\| \bm{m} \right\|_{L_2(\partial \Omega)} = 1, \int_{\partial \Omega} \bm{m} \, ds = \bm{0}}} \left( \int_{\partial \Omega} \left( \bm{v} - \Pi \bm{v} \right) \cdot \bm{m} \, ds \right)^{2} \\[1.5ex]
        &\leq C \left\| \bm{v} - \Pi \bm{v} \right\|_{L_2(\partial\Omega)}^{2} \\[1.5ex]
        &\lesssim \sum_{\substack{T \in \mathcal{T} \\ \partial T \cap \partial \Omega \neq \emptyset}} \left(\text{diam} \, T\right) \left\| \bm{\epsilon} (\bm{v}_{T}) - \frac{1}{3} \text{div}(\bm{v}_{T}) \mathbb{I} \right\|_{L_2(T) \times L_2(T)}^{2} \\[1.5ex]
        &\lesssim \left\| \bm{\epsilon}_{\mathcal{T}} (\bm{v}) - \frac{1}{3} \text{div}_{\mathcal{T}} (\bm{v}) \mathbb{I} \right\|_{L_2(\Omega) \times L_2(\Omega)}^{2},
    \end{align*}
    where we have used a discrete trace inequality, and Eqs.~\eqref{project_error_one} and \eqref{project_error_two}.
\end{example}

\section{Conclusion}

In this work, we introduced a class of discrete generalized Korn's inequalities. These inequalities are similar to standard Korn's inequalities, with the key caveat that they involve the trace-free symmetric gradient operator instead of the symmetric gradient operator. We have shown that the trace-free symmetric gradient operator requires a careful analysis, as it possesses a more complicated kernel space than the standard symmetric gradient operator. As a result, it is necessary for us to introduce a new quadratic mapping procedure, and a new projection operator in order to construct the corresponding discrete Korn's inequalities. To the best of our knowledge, this endeavor has been successful in generating a new class of discrete inequalities for piecewise $H^2$ vector fields in 3D. 

There are several interesting possibilities for future research. In particular, the extension of this work to 2D remains an open research question. In particular, the kernel space becomes infinite dimensional in this case, unless the vector field vanishes on the boundary~\cite{dain2006generalized}. In principle, such an extension is possible, although it is beyond the scope of the current work. In addition, we note that there are still opportunities to extend our analysis to higher dimensions, where more complicated derivative operators are present. This extension may be of particular interest for space-time finite element methods in 4D.


\appendix
\section{Seminorms and Generalized Korn's Inequalities}

In this section, our objective is to construct a pair of generalized Korn's inequalities which are augmented on the right hand side with seminorms. In preparation for constructing these inequalities, we will first construct an `inverse generalized Korn's inequality' that will facilitate our subsequent analysis.

\begin{lemma}
    For all $\bm{u} \in \left[H^{1} (\Omega) \right]^3$, the following upper bound on the trace-free symmetric gradient holds
    \begin{align}
        \left\| \bm{\epsilon}(\bm{u}) - \frac{1}{3} \text{div} (\bm{u}) \mathbb{I} \right\|_{L_2(\Omega) \times L_2(\Omega)} \leq C \left| \bm{u} \right|_{H^{1}(\Omega)}. \label{grad_bound_omega}
    \end{align}
    In addition, for all $\bm{u} \in \left[H^{1} (\Omega, \mathcal{T}) \right]^3$
    \begin{align}
        \left\| \bm{\epsilon}_{\mathcal{T}}(\bm{u}) - \frac{1}{3} \text{div}_{\mathcal{T}} (\bm{u}) \mathbb{I} \right\|_{L_2(\Omega) \times L_2(\Omega)} \leq C \left| \bm{u} \right|_{H^{1}(\Omega,\mathcal{T})}. \label{grad_bound}
    \end{align}
    \label{grad_lemma}
\end{lemma}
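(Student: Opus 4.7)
The plan is to obtain both inequalities by a single pointwise (algebraic) estimate, after which the integral and piecewise versions follow by integration and summation. Since $\bm{\epsilon}(\bm{u}) - \frac{1}{3}\text{div}(\bm{u})\mathbb{I}$ is a fixed linear combination of the entries of $\nabla \bm{u}$ (the symmetric part minus a scalar multiple of the identity, where the scalar is the trace of $\nabla\bm{u}$), the whole lemma should reduce to controlling a $3\times 3$ matrix expression by the Frobenius norm of $\nabla\bm{u}$.

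First, I would apply the triangle inequality pointwise to write
\begin{align*}
    \left| \bm{\epsilon}(\bm{u}) - \tfrac{1}{3}\text{div}(\bm{u})\mathbb{I} \right|
    \leq \left| \bm{\epsilon}(\bm{u}) \right| + \tfrac{1}{3} \left| \text{div}(\bm{u}) \right| \left| \mathbb{I} \right|,
\end{align*}
where $|\cdot|$ denotes the Frobenius norm. Next I would bound each term by $|\nabla \bm{u}|$: for the symmetric part, $|\bm{\epsilon}(\bm{u})| = |\tfrac12(\nabla\bm{u}+\nabla\bm{u}^t)| \leq |\nabla\bm{u}|$ by another triangle inequality, since $|\nabla\bm{u}^t|=|\nabla\bm{u}|$; for the divergence, $|\text{div}(\bm{u})| = |\text{tr}(\nabla\bm{u})| \leq \sqrt{3}\,|\nabla\bm{u}|$ by Cauchy--Schwarz; and $|\mathbb{I}|=\sqrt{3}$. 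Combining these gives a pointwise estimate of the form
\begin{align*}
    \left| \bm{\epsilon}(\bm{u}) - \tfrac{1}{3}\text{div}(\bm{u})\mathbb{I} \right| \leq C \left|\nabla \bm{u}\right|,
\end{align*}
with an explicit universal constant (one may take $C = 2$).

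Squaring and integrating over $\Omega$ yields \eqref{grad_bound_omega}, since $\int_\Omega |\nabla\bm{u}|^2 \, dx = |\bm{u}|_{H^1(\Omega)}^2$. For the broken version \eqref{grad_bound}, I would apply the same pointwise estimate on each tetrahedron $T \in \mathcal{T}$ (where $\bm{u}_T \in [H^1(T)]^3$ and the operators $\bm{\epsilon}_\mathcal{T}$, $\text{div}_\mathcal{T}$ reduce to the standard ones on $T$), square, integrate over $T$, and sum over $T \in \mathcal{T}$ using $\sum_{T \in \mathcal{T}} |\bm{u}|_{H^1(T)}^2 = |\bm{u}|_{H^1(\Omega,\mathcal{T})}^2$.

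There is no real obstacle here; the statement is essentially an algebraic inequality for $3\times 3$ matrices lifted to $L_2$. The only care needed is to avoid accidentally using a Korn-type inequality in the wrong direction: the lemma bounds the trace-free symmetric gradient by the full gradient, which is trivial, rather than the reverse (which would require the nontrivial analysis used in the main theorem). This trivial direction is precisely why the lemma appears as an auxiliary tool in the chain of estimates for Lemma~2 of Section~4.
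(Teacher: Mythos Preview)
Your proof is correct and follows essentially the same route as the paper: split via the triangle inequality into the symmetric-gradient and divergence pieces, bound each by the full gradient, and then pass to the broken version elementwise. The only cosmetic difference is that the paper carries out the estimate at the $L_2$-norm level and invokes an external lemma for $\|\text{div}(\bm{v})\|_{L_2(\Omega)} \le C\|\text{grad}(\bm{v})\|_{L_2(\Omega)\times L_2(\Omega)}$, whereas you work pointwise and obtain this bound directly from Cauchy--Schwarz with an explicit constant.
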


\begin{proof}
    We begin by considering a vector field, $\bm{v} \in \left[H^{1}(\Omega) \right]^{3}$. For this case, we have that
    \begin{align}
        &\left\| \bm{\epsilon}(\bm{v}) - \frac{1}{3} \text{div} (\bm{v}) \mathbb{I} \right\|_{L_2(\Omega) \times L_2(\Omega)} \label{grad_bound_local} \\[1.5ex]
        \nonumber &= \left\| \frac{1}{2}\left(\text{grad}(\bm{v}) + \text{grad}(\bm{v})^{t} \right) - \frac{1}{3} \text{div} (\bm{v}) \mathbb{I} \right\|_{L_2(\Omega) \times L_2(\Omega)} \\[1.5ex]
        \nonumber & \leq C \left( \left\| \text{grad}(\bm{v}) \right\|_{L_2(\Omega) \times L_2(\Omega)} + \left\| \text{grad}(\bm{v})^{t} \right\|_{L_2(\Omega) \times L_2(\Omega)} + \left\| \text{div}(\bm{v}) \mathbb{I} \right\|_{L_2(\Omega) \times L_2(\Omega)} \right) \\[1.5ex]
        \nonumber &\leq C \left( \left\| \text{grad}(\bm{v}) \right\|_{L_2(\Omega) \times L_2(\Omega)} + \left\| \text{div}(\bm{v}) \right\|_{L_2(\Omega)} \right),
    \end{align}
    where we have used the triangle inequality, and the fact that $\left\| \text{grad}(\bm{v}) \right\|_{L_2(\Omega) \times L_2(\Omega)} = \left\| \text{grad}(\bm{v})^{t} \right\|_{L_2(\Omega) \times L_2(\Omega)}$.
    
    In accordance with Lemma 3.34 of~\cite{john2016finite},
    \begin{align}
        \left\| \text{div}(\bm{v}) \right\|_{L_2(\Omega)} \leq C \left\| \text{grad}(\bm{v}) \right\|_{L_2(\Omega) \times L_2(\Omega)}.
    \end{align}
    Upon substituting this result into Eq.~\eqref{grad_bound_local}, we obtain
    \begin{align}
        \left\| \bm{\epsilon}(\bm{v}) - \frac{1}{3} \text{div} (\bm{v}) \mathbb{I} \right\|_{L_2(\Omega) \times L_2(\Omega)} \leq C \left\| \text{grad}(\bm{v}) \right\|_{L_2(\Omega) \times L_2(\Omega)}. \label{grad_bound_local_simp}
    \end{align}
    Now, upon replacing $\bm{v}$ with $\bm{u}$, we obtain the first desired result (Eq.~\eqref{grad_bound_omega}).
    
    Furthermore, upon replacing $\Omega$ with $T$ in Eq.~\eqref{grad_bound_local_simp}, squaring both sides, summing over all $T \in \mathcal{T}$, taking the square root, and letting
    \begin{align*}
        \bm{u}|_{T} = \bm{v},
    \end{align*}
    we obtain the second desired result (Eq.~\eqref{grad_bound}).
\end{proof}

We are now ready to introduce the first generalized Korn's inequality that applies to $H^1$ vector fields.

\begin{lemma}
        Suppose that $\bm{v} \in \left[H^{1}(\Omega)\right]^{3}$ and that a seminorm $\bm{\Phi} (\bm{v})$ satisfies the conditions in Eqs.~\eqref{phi_cond_one} and \eqref{phi_cond_two}. Under these circumstances, we have that
        \begin{align}
            \left| \bm{v} \right|_{H^{1} (\Omega)} \leq C_{\Omega} \left( \left\| \bm{\epsilon} \left(\bm{v}\right) - \frac{1}{3} \text{div} \left(\bm{v} \right) \mathbb{I} \right\|_{L_2(\Omega) \times L_2(\Omega)} + \bm{\Phi} \left(\bm{v}\right) \right), \qquad \forall \bm{v} \in \left[ H^1(\Omega) \right]^{3}.
            \label{korn_seminorm_cont_final}
        \end{align}
        %
        \label{seminorm_lemma}
\end{lemma}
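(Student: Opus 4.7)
The plan is to argue by contradiction, building on the baseline inequality~\eqref{korn_basic_cont} and the compact embedding $[H^1(\Omega)]^3 \hookrightarrow [L_2(\Omega)]^3$ in the standard Peetre--Tartar fashion. Suppose that~\eqref{korn_seminorm_cont_final} fails. Then for each $n \in \mathbb{N}$ there exists $\bm{v}_n \in [H^1(\Omega)]^3$ such that
\begin{align*}
    \left| \bm{v}_n \right|_{H^{1}(\Omega)} = 1 \qquad \text{and} \qquad \left\| \bm{\epsilon}(\bm{v}_n) - \frac{1}{3} \text{div}(\bm{v}_n) \mathbb{I} \right\|_{L_2(\Omega) \times L_2(\Omega)} + \bm{\Phi}(\bm{v}_n) \leq \frac{1}{n}.
\end{align*}

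The first step is to replace $\bm{v}_n$ by its zero-average representative. Because $\bm{\epsilon}$, $\text{div}$, and $|\cdot|_{H^1(\Omega)}$ all annihilate constant vector fields, and because~\eqref{phi_cond_three} shows that $\bm{\Phi}$ is invariant under the addition of constants, the sequence $\bm{w}_n := G \bm{v}_n$ satisfies the same identities and bounds as $\bm{v}_n$. Since $\int_\Omega \bm{w}_n \, dx = \bm{0}$ by construction, the standard Poincar\'e inequality yields $\|\bm{w}_n\|_{L_2(\Omega)} \lesssim |\bm{w}_n|_{H^1(\Omega)} = 1$, so $\{\bm{w}_n\}$ is bounded in $[H^1(\Omega)]^3$.

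The second step is to extract a strongly convergent subsequence. By the Rellich--Kondrachov theorem, a subsequence (still denoted $\{\bm{w}_n\}$) converges in $[L_2(\Omega)]^3$ to some $\bm{w}^\star$. I would then apply the full-norm version~\eqref{korn_basic_cont_full} to the difference $\bm{w}_n - \bm{w}_m$: the trace-free symmetric gradient term is bounded by $1/n + 1/m$, and the $L_2$ term is controlled by the Cauchy property of the $L_2$-convergent subsequence. This shows that $\{\bm{w}_n\}$ is Cauchy in $[H^1(\Omega)]^3$, hence converges strongly to $\bm{w}^\star \in [H^1(\Omega)]^3$.

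The final step is to identify the limit and reach a contradiction. Strong $H^1$ convergence passes to the derivative quantities, giving $\bm{\epsilon}(\bm{w}^\star) - \frac{1}{3} \text{div}(\bm{w}^\star) \mathbb{I} = \bm{0}$, so $\bm{w}^\star \in \mathbf{CK}(\Omega)$ by~\eqref{kernel_condition}. The continuity bound~\eqref{phi_cond_one} yields $\bm{\Phi}(\bm{w}^\star) = \lim_n \bm{\Phi}(\bm{w}_n) = 0$, and hence~\eqref{phi_cond_two} forces $\bm{w}^\star$ to be a constant vector. But $\int_\Omega \bm{w}^\star \, dx = \lim_n \int_\Omega \bm{w}_n \, dx = \bm{0}$, so $\bm{w}^\star = \bm{0}$, which contradicts $|\bm{w}^\star|_{H^1(\Omega)} = \lim_n |\bm{w}_n|_{H^1(\Omega)} = 1$. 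The main subtlety is that $|\cdot|_{H^1(\Omega)}$ is only a seminorm, so direct compactness on $\bm{v}_n$ would leave the $L_2$ part uncontrolled; the use of the projection $G$ together with the constant-invariance of $\bm{\Phi}$ is precisely what allows compactness to be invoked and the argument to close.
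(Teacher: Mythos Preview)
Your argument is correct and is essentially the same contradiction/compactness strategy the paper intends: the paper's own proof of this lemma is a one-line appeal to Lemma~\ref{grad_lemma} and the compact embedding of $H^1$ into $L_2$, and your write-up is precisely the standard Peetre--Tartar expansion of that remark, matching step for step the detailed template the paper uses for the closely related Lemma~\ref{curl_curl_seminorm_lemma}.
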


\begin{proof}
    The proof follows from Lemma~\ref{grad_lemma} and the compact embedding of $H^1$ into~$L_2$, (cf.~\cite{adams2003sobolev}, Theorem 6.3). 
\end{proof}

Our next generalized Korn's inequality applies to vector fields in $\mathbb{E}(\Omega)$, which is a complete, normed vector space which lies between $[H^1(\Omega)]^{3}$ and $[H^2(\Omega)]^{3}$. In what follows, we will define this space, and then present the associated generalized Korn's inequality. 

\begin{definition}[The Space $\mathbb{E}(\Omega)$]
   We begin by defining a new norm
    \begin{align*}
        \left\| \bm{v} \right\|_{\mathbb{E}(\Omega)} = \left\| \bm{v}  \right\|_{H^{1}(\Omega)} + \left| \int_{\Omega} \text{curl}(\text{curl} ( \bm{v} )) \, dx \right|,
    \end{align*}
    for all $\bm{v} \in \mathbb{E}(\Omega)$. Here, we define $\mathbb{E}(\Omega)$ as a Banach space which is a subspace of $\left[H^{1}(\Omega)\right]^{3}$ such that
    \begin{align*}
        \mathbb{E}(\Omega) = \left\{ \bm{v} \in \left[H^{1} (\Omega) \right]^{3} : \left| \int_{\Omega} \text{curl}(\text{curl} ( \bm{v} )) \, dx \right| < \infty \right\}.   
    \end{align*}
    We note that constructing $\mathbb{E}(\Omega)$ is always possible, as we can simply start with an incomplete metric space $\mathbb{E}'(\Omega)$, and then develop its completion in order to obtain $\mathbb{E}(\Omega)$ in accordance with~\cite{kreyszig1991introductory}, Theorem 1.6-2. For example, we can think of $\mathbb{E}(\Omega)$ as the completion of $\left[C^{\infty}(\Omega)\right]^{3}$ with respect to the norm $\left\| \cdot \right\|_{\mathbb{E}(\Omega)}$.

    By inspection, we have that the following inclusion holds
    \begin{align*}
    \left[H^{2}(\Omega)\right]^{3} \subset \mathbb{E}(\Omega) \subset \left[H^{1}(\Omega)\right]^{3}.
    \end{align*}
    Further, we assume that the following compact embedding holds
    \begin{align*}
        \mathbb{E}(\Omega) \subset \subset \left[L_{2}(\Omega)\right]^{3}.
    \end{align*}
\end{definition}

\begin{definition}[Div-Curl Seminorm]
   Consider the seminorm $\bm{\Psi}$ which is well-defined for vectors $\bm{v} \in \mathbb{E}(\Omega)$,
    \begin{align}
        \bm{\Psi} (\bm{v}) = \left| \int_{\Omega} \text{div}(\bm{v}) \, dx \right| + \left| \int_{\Omega} \text{curl}(\bm{v}) \, dx \right| +  \left| \int_{\Omega} \text{curl}(\text{curl} ( \bm{v} )) \, dx \right|. \label{particular_seminorm}
    \end{align}
    This seminorm has the following properties
    \begin{align}
        \bm{\Psi} \left(\bm{v}\right) \leq C_{\bm{\Psi}} \left\| \bm{v} \right\|_{\mathbb{E}(\Omega)}, \qquad \forall \bm{v} \in \mathbb{E}(\Omega), \label{phi_cond_one_exp}
    \end{align}
    and
    \begin{align}
        \bm{\Psi} \left(\bm{m}\right) = 0 \qquad \text{and} \qquad \bm{m} \in \mathbf{CK}(\Omega) \qquad \Longleftrightarrow \qquad \bm{m} = \text{a constant vector}. \label{phi_cond_two_exp}
    \end{align}
    It immediately follows that
    \begin{align}
        \bm{\Psi} \left(\bm{v} + \bm{c} \right) = \bm{\Psi}(\bm{v}).
        \label{phi_cond_three_exp}
    \end{align}
\end{definition}

\begin{lemma}
        Suppose that $\bm{v} \in \mathbb{E}(\Omega)$ and that a particular seminorm  (Eq.~\eqref{particular_seminorm}) satisfies the conditions in Eqs.~\eqref{phi_cond_one_exp} and \eqref{phi_cond_two_exp}. Under these circumstances, we have that
        \begin{align}
            \left| \bm{v} \right|_{H^{1} (\Omega)} + \left| \int_{\Omega} \text{curl}(\text{curl} ( \bm{v} )) \, dx \right| \leq C_{\Omega} \left( \left\| \bm{\epsilon} \left(\bm{v}\right) - \frac{1}{3} \text{div} \left(\bm{v} \right) \mathbb{I} \right\|_{L_2(\Omega) \times L_2(\Omega)} + \bm{\Psi} \left(\bm{v}\right) \right),
           \label{curl_curl_korn_seminorm_cont_final}
            \\[1.5ex] \nonumber \forall \bm{v} \in \mathbb{E}(\Omega).
        \end{align}
        \label{curl_curl_seminorm_lemma}
\end{lemma}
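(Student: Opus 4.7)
The plan is to mirror the compactness-and-contradiction strategy used for Lemma~\ref{seminorm_lemma}, but carried out in the Banach space $\mathbb{E}(\Omega)$ in place of $[H^{1}(\Omega)]^3$. Suppose Eq.~\eqref{curl_curl_korn_seminorm_cont_final} fails; then there is a sequence $\{\bm{v}_n\} \subset \mathbb{E}(\Omega)$ with
\begin{align*}
|\bm{v}_n|_{H^{1}(\Omega)} + \left| \int_{\Omega} \text{curl}(\text{curl}(\bm{v}_n)) \, dx \right| = 1
\end{align*}
and $\left\|\bm{\epsilon}(\bm{v}_n) - \frac{1}{3}\text{div}(\bm{v}_n)\mathbb{I}\right\|_{L_2(\Omega)\times L_2(\Omega)} + \bm{\Psi}(\bm{v}_n) \to 0$. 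Both sides of the claimed inequality are invariant under adding a constant vector (by Eq.~\eqref{phi_cond_three_exp}, and because constants kill $\text{grad}$, $\text{div}$, $\text{curl}$, and hence $\text{curl}(\text{curl}(\cdot))$), so I may subtract the mean and assume $\int_{\Omega} \bm{v}_n \, dx = \bm{0}$.

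By a standard Poincar\'e inequality, $\|\bm{v}_n\|_{L_2(\Omega)} \leq C|\bm{v}_n|_{H^{1}(\Omega)} \leq C$, so $\|\bm{v}_n\|_{\mathbb{E}(\Omega)}$ is uniformly bounded. The assumed compact embedding $\mathbb{E}(\Omega) \subset\subset [L_2(\Omega)]^3$ yields a subsequence converging in $L_2(\Omega)$ to some $\bm{v}$. Applying Eq.~\eqref{korn_basic_cont} to $\bm{v}_n - \bm{v}_m$ and using that both the trace-free symmetric gradient term and the $L_2$ term vanish as $n,m\to\infty$ shows $\{\bm{v}_n\}$ is Cauchy in $[H^{1}(\Omega)]^3$. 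Hence $\bm{v}_n \to \bm{v}$ in $H^{1}$, and passing to the limit in $\bm{\epsilon}(\bm{v}_n)-\frac{1}{3}\text{div}(\bm{v}_n)\mathbb{I}\to\bm{0}$ yields $\bm{v} \in \mathbf{CK}(\Omega)$, in particular $\bm{v}$ is a smooth polynomial.

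The main obstacle is promoting $H^{1}$ convergence to $\mathbb{E}(\Omega)$ convergence, since the functional $\bm{v}\mapsto \int_\Omega \text{curl}(\text{curl}(\bm{v}))\,dx$ is not $H^{1}$-continuous and second derivatives are not controlled pointwise. The key observation is that this functional is linear and its absolute value is one of the three summands of $\bm{\Psi}$, so
\begin{align*}
\left| \int_\Omega \text{curl}(\text{curl}(\bm{v}_n - \bm{v}_m)) \, dx \right| \leq \bm{\Psi}(\bm{v}_n) + \bm{\Psi}(\bm{v}_m) \to 0.
\end{align*}
Combined with the $H^{1}$-Cauchy property established above, this gives that $\{\bm{v}_n\}$ is Cauchy in $\|\cdot\|_{\mathbb{E}(\Omega)}$. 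Completeness of $\mathbb{E}(\Omega)$ then forces $\bm{v}_n \to \bm{v}$ in $\mathbb{E}(\Omega)$; in particular, $\int_\Omega \text{curl}(\text{curl}(\bm{v}_n))\,dx \to \int_\Omega \text{curl}(\text{curl}(\bm{v}))\,dx$.

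To conclude, $\text{div}$ and $\text{curl}$ are continuous from $[H^{1}(\Omega)]^3$ to $L_2$, so the integrals $\int_\Omega \text{div}(\bm{v}_n)\,dx$ and $\int_\Omega \text{curl}(\bm{v}_n)\,dx$ pass to the limit; together with the previous display, each nonnegative term of $\bm{\Psi}$ converges, giving $\bm{\Psi}(\bm{v}) = 0$. Condition~\eqref{phi_cond_two_exp} then forces $\bm{v}$ to be a constant vector, and the normalization $\int_{\Omega}\bm{v}_n\,dx = \bm{0}$ passes to the limit to give $\bm{v} = \bm{0}$. Then $\bm{v}_n \to \bm{0}$ in $\mathbb{E}(\Omega)$ contradicts $|\bm{v}_n|_{H^{1}(\Omega)} + |\int_\Omega \text{curl}(\text{curl}(\bm{v}_n))\,dx| = 1$, completing the proof.
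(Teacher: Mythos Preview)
Your proof is correct and follows essentially the same contradiction--compactness argument as the paper: normalize, subtract the mean, use Poincar\'e and the compact embedding $\mathbb{E}(\Omega)\subset\subset [L_2(\Omega)]^3$ to extract an $L_2$-convergent subsequence, then apply the $L_2$-augmented Korn inequality~\eqref{korn_basic_cont} together with the observation that $\left|\int_\Omega\text{curl}(\text{curl}(\cdot))\,dx\right|$ is dominated by $\bm{\Psi}$ to upgrade to a Cauchy sequence in $\mathbb{E}(\Omega)$, and finally identify the limit as a constant in $\mathbf{CK}(\Omega)$ to reach a contradiction. The only cosmetic difference is ordering: you first secure $H^1$-convergence and deduce $\bm{v}\in\mathbf{CK}(\Omega)$ before promoting to $\mathbb{E}(\Omega)$-convergence, whereas the paper bundles both into a single inequality (adding $\left|\int_\Omega\text{curl}(\text{curl}(\bm{v}))\,dx\right|$ to each side of~\eqref{korn_basic_cont_full}) and passes to the limit in $\mathbb{E}(\Omega)$ directly; the ingredients are identical.
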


\begin{proof}
    The proof is by contradiction. We assume that Eq.~\eqref{curl_curl_korn_seminorm_cont_final} is false. In this case, there must exist a sequence $\bm{v}_n \in \mathbb{E}(\Omega)$ which satisfies the following conditions
    \begin{align}
        \left| \bm{v}_n \right|_{H^{1}(\Omega)} + \left| \int_{\Omega} \text{curl}(\text{curl} ( \bm{v}_{n} )) \, dx \right|  = 1, \qquad \forall n \geq 1, \label{embed_one_new}
    \end{align}
    and 
    \begin{align}
        1 \geq n \left( \left\| \bm{\epsilon} \left(\bm{v}_{n} \right) - \frac{1}{3} \text{div} \left(\bm{v}_{n} \right) \mathbb{I} \right\|_{L_2(\Omega) \times L_2(\Omega)} + \bm{\Psi} \left(\bm{v}_{n} \right) \right). \label{embed_one_a_new}
    \end{align}
    We can further specify the sequence, by assuming that 
    \begin{align}
        \int_{\Omega} \bm{v}_{n} \, dx = \bm{0}, \qquad \forall n \geq 1, \label{embed_two_new}
    \end{align}
    in accordance with Eq.~\eqref{phi_cond_three_exp}.
    Next, Eqs.~\eqref{embed_one_new}, \eqref{embed_two_new}, and a standard Poincar{\'e}-Friedrichs inequality imply that $\left\{\bm{v}_{n} \right\}_{n=1}^{\infty}$ remains bounded in $\mathbb{E}(\Omega)$.
    
    Now, due to the compact embedding of $\mathbb{E}(\Omega)$ into $\left[L_2(\Omega)\right]^{3}$, we can consider a convergent subsequence in $\left[L_2(\Omega)\right]^{3}$. More precisely, we can assume there exists $\bm{v}_{\ast} \in \left[L_{2}(\Omega) \right]^{3}$ such that
    \begin{align}
        \lim_{n \rightarrow \infty} \left\| \bm{v}_{n} - \bm{v}_{\ast} \right\|_{L_2(\Omega)} = 0. \label{embed_three_new}
    \end{align}
    Combining this statement along with Eq.~\eqref{embed_one_a_new}, we have that
    \begin{align}
        \lim_{n\rightarrow\infty}  \left\| \bm{\epsilon} \left(\bm{v}_{n} \right) - \frac{1}{3} \text{div} \left(\bm{v}_{n} \right) \mathbb{I} \right\|_{L_2(\Omega) \times L_2(\Omega)} = 0, \qquad \lim_{n\rightarrow\infty} \bm{\Psi}(\bm{v}_{n}) = 0. \label{embed_four_new}
    \end{align}
    The limits in Eqs.~\eqref{embed_three_new} and \eqref{embed_four_new}, in conjunction with Korn's inequality (Eq.~\eqref{korn_basic_cont_full}) imply that $\left\{\bm{v}_{n} \right\}_{n=1}^{\infty}$ is a Cauchy sequence in $\mathbb{E}(\Omega)$. For the sake of clarity, we will further expand this argument below. First, let Eq.~\eqref{korn_basic_cont_full} be rewritten as follows
    \begin{align}
        &\left\| \bm{v} \right\|_{H^{1} (\Omega)} +         \left| \int_{\Omega} \text{curl}(\text{curl} ( \bm{v} )) \, dx \right| \label{curl_curl_korn} \\[1.5ex]
        \nonumber &\leq \left(C_{\Omega} + 1 \right) \left( \left\| \bm{\epsilon} \left(\bm{v}\right) - \frac{1}{3} \text{div} \left(\bm{v} \right) \mathbb{I} \right\|_{L_2(\Omega) \times L_2(\Omega)} + \left\| \bm{v} \right\|_{L_2(\Omega)} + \left| \int_{\Omega} \text{curl}(\text{curl} ( \bm{v} )) \, dx \right| \right).
    \end{align}
    Next, we can consider two sequence members $\bm{v}_{n_{j}}$ and $\bm{v}_{n_{k}}$, where $k > j$, and both $k$ and $j$ are large. Upon substituting these members into Eq.~\eqref{curl_curl_korn}, we obtain
    \begin{align*}
        &\left\| \bm{v}_{n_{j}} - \bm{v}_{n_{k}} \right\|_{H^{1} (\Omega)} +         \left| \int_{\Omega} \text{curl}(\text{curl} ( \bm{v}_{n_{j}} - \bm{v}_{n_{k}} )) \, dx \right|  \\[1.5ex]
        &\leq \left(C_{\Omega} + 1 \right) \Bigg( \left\| \bm{\epsilon} \left(\bm{v}_{n_{j}} - \bm{v}_{n_{k}}\right) - \frac{1}{3} \text{div} \left(\bm{v}_{n_{j}} - \bm{v}_{n_{k}} \right) \mathbb{I} \right\|_{L_2(\Omega) \times L_2(\Omega)} \\[1.5ex]
        & + \left\| \bm{v}_{n_{j}} - \bm{v}_{n_{k}} \right\|_{L_2(\Omega)} + \left| \int_{\Omega} \text{curl}(\text{curl} ( \bm{v}_{n_{j}} - \bm{v}_{n_{k}} )) \, dx \right| \Bigg).
    \end{align*}
    On using the triangle inequality, we obtain
    \begin{align*}
        &\left\| \bm{v}_{n_{j}} - \bm{v}_{n_{k}} \right\|_{H^{1} (\Omega)} +         \left| \int_{\Omega} \text{curl}(\text{curl} ( \bm{v}_{n_{j}} - \bm{v}_{n_{k}} )) \, dx \right|  \\[1.5ex]
        &\leq \left(C_{\Omega} + 1 \right) \Bigg( \left\| \bm{\epsilon} \left(\bm{v}_{n_{j}} \right) - \frac{1}{3} \text{div} \left(\bm{v}_{n_{j}}  \right) \mathbb{I} \right\|_{L_2(\Omega) \times L_2(\Omega)} + \left\| \bm{\epsilon} \left( \bm{v}_{n_{k}}\right) - \frac{1}{3} \text{div} \left( \bm{v}_{n_{k}} \right) \mathbb{I} \right\|_{L_2(\Omega) \times L_2(\Omega)} \\[1.5ex]
        & + \left\| \bm{v}_{n_{j}} - \bm{v}_{n_{k}} \right\|_{L_2(\Omega)} + \left| \int_{\Omega} \text{curl}(\text{curl} ( \bm{v}_{n_{j}} )) \, dx \right| + \left| \int_{\Omega} \text{curl}(\text{curl} ( \bm{v}_{n_{k}} )) \, dx \right| \Bigg).
    \end{align*}
    In accordance with Eq.~\eqref{embed_one_a_new}, the RHS can be rewritten as follows
    \begin{align}
        &\left\| \bm{v}_{n_{j}} - \bm{v}_{n_{k}} \right\|_{H^{1} (\Omega)} +         \left| \int_{\Omega} \text{curl}(\text{curl} ( \bm{v}_{n_{j}} - \bm{v}_{n_{k}} )) \, dx \right| \label{cauchy_proof}  \\[1.5ex]
        \nonumber &\leq \left(C_{\Omega} + 1 \right) \Bigg( \frac{1}{n_j} + \frac{1}{n_k} + \left\| \bm{v}_{n_{j}} - \bm{v}_{n_{k}} \right\|_{L_2(\Omega)}  \Bigg).
    \end{align}
    Evidently, the sequence is Cauchy in $\mathbb{E}(\Omega)$, as $n_j \rightarrow \infty$, $n_k \rightarrow \infty$, and Eq.~\eqref{embed_three_new} holds.
    
    In addition, the limit of $\left\{ \bm{v}_n \right\}$ is $\bm{v}_{\ast} \in \mathbb{E}(\Omega)$ because $\mathbb{E}(\Omega)$ is a Banach space, and naturally
    \begin{align}
        \lim_{n \rightarrow \infty} \left\| \bm{v}_{n} - \bm{v}_{\ast} \right\|_{\mathbb{E}(\Omega)} = 0. \label{embed_five_new}
    \end{align}
    Having established the properties of our sequence in $\mathbb{E}(\Omega)$, we now turn our attention to the seminorm $\bm{\Psi}$. In accordance with Eq.~\eqref{phi_cond_one_exp} and the reverse triangle inequality, we have
    \begin{align}
        \left| \bm{\Psi}(\bm{v}_{n}) - \bm{\Psi}(\bm{v}_{\ast}) \right| \leq \bm{\Psi}(\bm{v}_{n} - \bm{v}_{\ast}) \leq C_{\bm{\Psi}} \left\| \bm{v}_{n} - \bm{v}_{\ast} \right\|_{\mathbb{E}(\Omega)}. \label{embed_six_new}
    \end{align}
    Upon combining Eqs.~\eqref{embed_four_new}, \eqref{embed_five_new}, and \eqref{embed_six_new}, we obtain
    \begin{align}
        \bm{\Psi}(\bm{v}_{\ast}) = 0. \label{embed_seven_new}
    \end{align}
    In a similar fashion, we have that
    \begin{align}
        &\left| \left\| \bm{\epsilon} \left(\bm{v}_{n} \right) - \frac{1}{3} \text{div} \left(\bm{v}_{n} \right) \mathbb{I} \right\|_{L_2(\Omega) \times L_2(\Omega)} - \left\| \bm{\epsilon} \left(\bm{v}_{\ast} \right) - \frac{1}{3} \text{div} \left(\bm{v}_{\ast} \right) \mathbb{I} \right\|_{L_2(\Omega) \times L_2(\Omega)} \right| \label{embed_eight_new} \\[1.5ex]
        \nonumber &\leq \left\| \bm{\epsilon} \left(\bm{v}_{n} - \bm{v}_{\ast} \right) - \frac{1}{3} \text{div} \left(\bm{v}_{n} - \bm{v}_{\ast} \right) \mathbb{I} \right\|_{L_2(\Omega) \times L_2(\Omega)} \leq C \left\|\bm{v}_{n} - \bm{v}_{\ast} \right\|_{H^{1}(\Omega)} \leq C \left\|\bm{v}_{n} - \bm{v}_{\ast} \right\|_{\mathbb{E}(\Omega)}, 
       \end{align}     
        where we have used Eq.~\eqref{grad_bound_omega} from Lemma~\ref{grad_lemma} on the last line. Upon combining Eqs.~\eqref{embed_four_new}, \eqref{embed_five_new}, and \eqref{embed_eight_new}, we obtain
        \begin{align}
            \bm{\epsilon} \left(\bm{v}_{\ast} \right) - \frac{1}{3} \text{div} \left(\bm{v}_{\ast} \right) \mathbb{I} = \bm{0}. \label{embed_nine_new}
        \end{align}
        Finally, based on Eqs.~\eqref{kernel_condition}, \eqref{phi_cond_two_exp}, \eqref{embed_seven_new}, and \eqref{embed_nine_new}, we conclude that $\bm{v}_{\ast}$ belongs to the kernel space $\mathbf{CK}(\Omega)$, and is a constant vector. However, this immediately contradicts Eqs.~\eqref{embed_one_new} and \eqref{embed_five_new}. 
\end{proof}

\section{Projection Operator}

In this section, we introduce an element-wise projection operator that projects H2 vector fields on to the kernel space $\mathbf{CK}(T)$, which itself is contained in the space of quadratic polynomial vector fields.

\begin{lemma}
        Consider the projection operator $\Pi_{T} : \left[ H^{2}(T) \right]^{3} \rightarrow \left[P_2(T)\right]^{3}$, which is defined such that
        \begin{align}
        &\left| \int_{T} \left(\bm{v} - \Pi_{T} \bm{v} \right) dx \right| = 0, \qquad \left| \int_{T} \text{div} \left(\bm{v} - \Pi_{T} \bm{v} \right) dx \right| = 0, \label{interp_repeat} \\[1.5ex]
        \nonumber &\left| \int_{T} \text{curl} \left( \bm{v} - \Pi_{T} \bm{v} \right) dx \right| = 0, \qquad \left| \int_{T} \text{curl} \left( \text{curl} \left( \bm{v} - \Pi_{T} \bm{v} \right) \right) dx \right| = 0.
        \end{align}
        This operator is well-posed, and projects $\bm{v} \in \left[ H^{2}(T) \right]^{3}$ on to the kernel space $\mathbf{CK}(T)$. 
        \label{projection_lemma}
\end{lemma}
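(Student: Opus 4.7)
The plan is to reduce the assertion to a finite-dimensional linear-algebra fact. The key observation is that $\mathbf{CK}(T)$ is a $10$-dimensional linear subspace of $\left[P_{2}(T)\right]^{3}$, parameterized by $(\bm{a}, \bm{b}, \rho, \bm{Q}) \in \mathbb{R}^{3} \times \mathbb{R}^{3} \times \mathbb{R} \times \mathfrak{so}(3)$. I would first confirm that this parameterization is injective, by separating the constant, linear, and quadratic components of a generic $\bm{m} \in \mathbf{CK}(T)$ and using that $\rho\mathbb{I}$ is symmetric while $\bm{Q}$ is skew-symmetric. On the other side, the four defining conditions of $\Pi_{T}$ furnish $3 + 1 + 3 + 3 = 10$ scalar constraints, exactly matching $\dim \mathbf{CK}(T)$. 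Hence it suffices to prove that the linear map $\mathcal{L}: \mathbf{CK}(T) \to \mathbb{R}^{10}$ sending $\bm{m}$ to the tuple of its four integral moments has trivial kernel; once that is verified, $\mathcal{L}$ is an isomorphism by equal-dimensionality, and $\Pi_{T}\bm{v}$ is uniquely characterized as the element of $\mathbf{CK}(T)$ whose moments match those of $\bm{v}$.

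To prove injectivity --- which is simultaneously the iff statement quoted just before the lemma --- I would perform an explicit triangular reduction. Writing $\bm{Q}\bm{x} = \bm{q} \times \bm{x}$ for the axial vector $\bm{q}$ and computing the derivative expressions termwise, one obtains the identities $\text{div}(\bm{m}) = 6(\bm{a} \cdot \bm{x}) + 3\rho$, $\text{curl}(\bm{m}) = 4(\bm{a} \times \bm{x}) + 2\bm{q}$, and (using $\text{curl}(\bm{c} \times \bm{x}) = 2\bm{c}$ for constant $\bm{c}$) $\text{curl}(\text{curl}(\bm{m})) = 8\bm{a}$. Since the last expression is constant in $\bm{x}$, the curl-curl moment condition collapses to $8|T|\bm{a} = \bm{0}$, forcing $\bm{a} = \bm{0}$. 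With $\bm{a}$ eliminated, the curl and divergence moments simplify to $2|T|\bm{q} = \bm{0}$ and $3|T|\rho = 0$ respectively, giving $\bm{Q} = \bm{0}$ and $\rho = 0$. Finally the vector moment reduces to $|T|\bm{b} = \bm{0}$, yielding $\bm{b} = \bm{0}$, and hence $\bm{m} = \bm{0}$, as required.

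Once $\mathcal{L}$ is known to be an isomorphism, existence and uniqueness of $\Pi_{T}\bm{v}$ for each $\bm{v} \in \left[H^{2}(T)\right]^{3}$ follow at once, the four moments of $\bm{v}$ all being finite by the $H^{2}$-regularity (which places $\text{curl}(\text{curl}(\bm{v}))$ in $L_{2}(T) \subset L_{1}(T)$). By construction $\Pi_{T}\bm{v} \in \mathbf{CK}(T) \subset \left[P_{2}(T)\right]^{3}$, so the well-posedness claim and the image statement of the lemma are established simultaneously. The only nontrivial step in the entire argument is the correct computation of the three derivative identities above; after that, the proof is a bookkeeping exercise in dimension $10$.
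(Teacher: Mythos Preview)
Your proposal is correct and follows essentially the same approach as the paper: both arguments show that a conformal Killing field with vanishing integral moments must be zero by computing $\text{curl}\,\text{curl}$, $\text{curl}$, $\text{div}$, and the zeroth moment in that triangular order to successively eliminate $\bm{a}$, $\bm{Q}$, $\rho$, and $\bm{b}$. Your linear-algebra framing (dimension count $10=10$, injectivity $\Rightarrow$ isomorphism) makes the well-posedness conclusion slightly more explicit than the paper's version, but the substance is the same.
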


\begin{proof}
    Let us introduce the function $\bm{w} \in \mathbf{CK}(T)$, where
    \begin{align}
        \bm{w} = 2 \left(\bm{a} \cdot \bm{x}\right) \bm{x} - \left| \bm{x} \right|^2 \bm{a} + \rho \bm{x} + \bm{Q} \bm{x} + \bm{b}, \label{expanded_w}
    \end{align}
    and where
    \begin{align}
        \bm{a} = 
        \begin{bmatrix}
        a_1 \quad a_2 \quad a_3
        \end{bmatrix}^T, \qquad 
        \bm{b} = 
        \begin{bmatrix}
        b_1 \quad b_2 \quad b_3
        \end{bmatrix}^T, 
        \qquad
        \bm{Q} = 
        \begin{bmatrix}
        0 & q_1 & q_2 \\[1.5ex]
        -q_1 & 0 & q_3 \\[1.5ex]
        -q_2 & -q_3 & 0
        \end{bmatrix}.
    \end{align}
    In order to ensure well-posedness of $\Pi_{T}$, and that the range of its projection is the appropriate kernel space, we require that $\bm{w} = \bm{0}$ when
    \begin{align}
        &\int_{T} \bm{w} \, dx=\bm{0}, \label{cond_one} \qquad \int_{T} \text{div}( \bm{w}) \, dx=0,  \\[1.5ex]
        &\int_{T} \text{curl}(\bm{w}) \, dx=\bm{0}, \label{cond_three} \qquad \int_{T} \text{curl} (\text{curl} (\bm{w}) ) \, dx=\bm{0}. 
    \end{align}
    Fortunately, this relationship is relatively easy to demonstrate. We start by substituting Eq.~\eqref{expanded_w} into the rightmost part of Eq.~\eqref{cond_three} as follows
    \begin{align*}
        \int_{T} \text{curl} (\text{curl} (\bm{w})) \, dx = 8 \int_{T} \bm{a} \, dx = \bm{0}.
    \end{align*}
    This immediately implies that $\bm{a} = \bm{0}$. Next, we substitute Eq.~\eqref{expanded_w} into the leftmost part of Eq.~\eqref{cond_three} in order to obtain
    \begin{align*}
        \int_{T} \text{curl}( \bm{w}) \, dx = 2 \int_{T} \begin{bmatrix}
        -\left(q_3 + 2 a_3 x_2 - 2 a_2 x_3 \right) \\[1.5ex]
        \left(q_2 + 2 a_3 x_1 - 2 a_1 x_3 \right) \\[1.5ex]
        -\left(q_1 + 2 a_2 x_1 - 2 a_1 x_2 \right)
        \end{bmatrix}
        \, dx = 2 \int_{T} \begin{bmatrix}
        -q_3  \\[1.5ex]
        q_2   \\[1.5ex]
        -q_1 
        \end{bmatrix}
        \, dx = \bm{0}.
    \end{align*}
    This immediately implies that $\bm{Q} = \bm{0}$. Next, we substitute Eq.~\eqref{expanded_w} into the rightmost part of Eq.~\eqref{cond_one}, such that
    \begin{align*}
        \int_{T} \text{div} ( \bm{w}) \, dx = \int_{T} \left( 6 \left(a_1 x_1 + a_2 x_2 + a_3 x_3 \right) + 3 \rho \right) \, dx = 3 \int_{T} \rho \, dx = 0.
    \end{align*}
    This immediately implies that $\rho = 0$. Finally, we substitute Eq.~\eqref{expanded_w} into the leftmost part of Eq.~\eqref{cond_one} as follows
    \begin{align*}
        \int_{T} \bm{w} \, dx &= \int_{T} 
        \begin{bmatrix}
        b_1 + q_1 x_2 + q_2 x_3 + a_1 \left(x_1^2 - x_2^2 - x_3^2\right) + 
      x_1 \left(2 a_2 x_2 + 2 a_3 x_3 + \rho \right) \\[1.5ex]
     b_2 - q_1 x_1 + q_3 x_3 - a_2 \left(x_1^2 - x_2^2 + x_3^2 \right) + 
      x_2 \left(2 a_1 x_1 + 2 a_3 x_3 + \rho \right) \\[1.5ex] 
     b_3 - q_2 x_1 - q_3 x_2 - a_3 \left(x_1^2 + x_2^2 - x_3^2 \right) + 
      x_3 \left(2 a_1 x_1 + 2 a_2 x_2 + \rho \right)
        \end{bmatrix}
        \, dx \\[1.5ex]
        & = \int_{T} \bm{b} \, dx = \bm{0}.
    \end{align*}
    This immediately implies that $\bm{b} = \bm{0}$. Therefore, since $\rho = 0$, $\bm{a} = \bm{b} = \bm{0}$, and $\bm{Q} = \bm{0}$, by inspection we have that $\bm{w} = \bm{0}$.
\end{proof}

\section{Generalized Korn's Inequality for Tetrahedra}

In this section, we derive a generalized Korn's inequality for tetrahedral meshes. Prior to introducing this inequality, we establish several useful definitions along with an important lemma. Thereafter, we summarize the principal result of the section in a theorem.

\begin{definition}[Reformulation of the H2 Seminorm]
    Let us begin by redefining the H2 seminorm for vectors $\bm{v} \in [H^{2}(\Omega)]^{3}$,
\begin{align*}
    \vertiii{\bm{v}}_{H^{2}(\Omega)} =   \left( \sum_{i=1}^{3} \int_{\Omega} \left| \text{hess}\left( v_i \right) \right|^{2} dx \right)^{1/2},
\end{align*}
where the `$\text{hess}$' operator is the second-derivative Hessian operator which acts on a generic scalar $\phi \in H^{2}(\Omega)$ and returns a $3\times 3$ symmetric matrix as follows
\begin{align*}
    \left[\text{hess} (\phi)\right]_{ij} = \frac{\partial^{2} \phi}{\partial x_i\partial x_j}.
\end{align*}
%
\end{definition}

\begin{definition}[Div-Curl Seminorm with Scaling]
   Consider the seminorm $\bm{\Psi}_{\ell}$ for vectors $\bm{v} \in [H^{2}(\Omega)]^{3}$,
    \begin{align}
        \bm{\Psi}_{\ell} (\bm{v}) = \ell^{a} \left( \left| \int_{\Omega} \text{div}(\bm{v}) \, dx \right| + \left| \int_{\Omega} \text{curl}(\bm{v}) \, dx \right| \right) + \ell^{b} \left| \int_{\Omega} \text{curl}(\text{curl} ( \bm{v} )) \, dx \right|, \label{particular_seminorm_redux}
    \end{align}
    where $\ell$ is a generic length scale, and $a$ and $b$ are generic constants. In 3-dimensional space, it is common practice to set $a = -3/2$ and $b = -1/2$ for scaling purposes, although this is not strictly necessary.
\end{definition}




We are now ready to introduce an important lemma.

\begin{lemma}
    Consider a vector field defined on the reference element $\widehat{T}$ such that $\widehat{\bm{v}} \in [ H^{2}(\widehat{T})]^{3}$ and
    \begin{align}
        \left\| \widehat{\bm{v}} \right\|_{H^{2}(\widehat{T})} = 1.
    \end{align}
    Next, suppose that an invertible linear mapping exists between $\widehat{T}$ and any generic tetrahedron $T$
    \begin{align}
        \bm{\alpha} (\widehat{\bm{x}}) = \bm{B} \widehat{\bm{x}},
    \end{align}
    where $\bm{B}$ belongs to the Lie group of non-singular matrices, $GL(3)$. Under these circumstances, the following function is continuous on $GL(3)$
    \begin{align}
        \Lambda_{\widehat{\bm{v}}}(\bm{B}) = \frac{\left| \widehat{\bm{v}} \circ \bm{\alpha}^{-1} \right|_{H^{1}(T)}}{ \left\| \bm{\epsilon} \left(\widehat{\bm{v}} \circ \bm{\alpha}^{-1}\right) - \frac{1}{3} \text{div}(\widehat{\bm{v}} \circ \bm{\alpha}^{-1} ) \mathbb{I} \right\|_{L_{2}(T) \times L_{2}(T)} +\bm{\Psi}_{\ell} \left(\widehat{\bm{v}} \circ \bm{\alpha}^{-1} \right)}. \label{particular_function}
    \end{align}
    \label{continuity_lemma}
\end{lemma}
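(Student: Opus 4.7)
The plan is to treat $\Lambda_{\widehat{\bm{v}}}$ as a ratio of two functions of $\bm{B}$, show that each is continuous on $GL(3)$, and then argue that the denominator is strictly positive so that the ratio itself is continuous. The common engine is a change of variables. The affine map $\bm{\alpha}(\widehat{\bm{x}}) = \bm{B}\widehat{\bm{x}}$ pulls every integral over $T$ back to an integral over $\widehat{T}$, turning each derivative with respect to $\bm{x}$ into a derivative with respect to $\widehat{\bm{x}}$ contracted with $\bm{B}^{-1}$, and the volume element into $|\det \bm{B}| \, d\widehat{x}$.

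For the numerator, the chain rule gives $\nabla_{x}(\widehat{\bm{v}} \circ \bm{\alpha}^{-1})(\bm{x}) = \nabla_{\widehat{x}} \widehat{\bm{v}}(\widehat{\bm{x}}) \bm{B}^{-1}$, so
\begin{align*}
\left| \widehat{\bm{v}} \circ \bm{\alpha}^{-1} \right|_{H^{1}(T)}^{2} = |\det \bm{B}| \int_{\widehat{T}} \left| \nabla_{\widehat{x}} \widehat{\bm{v}}(\widehat{\bm{x}}) \, \bm{B}^{-1} \right|^{2} d\widehat{x}.
\end{align*}
The coefficients of this expression are fixed integrals of polynomials in the partial derivatives of $\widehat{\bm{v}}$ over $\widehat{T}$, while the dependence on $\bm{B}$ enters only through the entries of $\bm{B}^{-1}$ and $|\det \bm{B}|$. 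Since matrix inversion and determinant are continuous on $GL(3)$, the numerator is a continuous function of $\bm{B}$.

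The denominator is handled term by term in the same way. The trace-free symmetric gradient involves only first derivatives, so its $L_{2}$ norm on $T$ has the same algebraic structure as the numerator. For $\bm{\Psi}_{\ell}$, each of the three integrals $\int_{T} \text{div}(\cdot)\, dx$, $\int_{T} \text{curl}(\cdot)\, dx$, and $\int_{T} \text{curl}(\text{curl}(\cdot))\, dx$ becomes, after the change of variables, a fixed integral over $\widehat{T}$ multiplied by a polynomial in entries of $\bm{B}^{-1}$ and $|\det \bm{B}|$ (one or two powers of $\bm{B}^{-1}$ per order of differentiation). The length scale $\ell = \text{diam}(T) = \text{diam}(\bm{B}\widehat{T})$ is continuous in $\bm{B}$ as a maximum of continuous functions on the compact set $\widehat{T} \times \widehat{T}$. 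Summing the pieces shows that the denominator is continuous on $GL(3)$.

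The final step, and the only substantive obstacle, is to rule out vanishing of the denominator. If the denominator were zero at some $\bm{B}_{\star} \in GL(3)$, then on $T_{\star} = \bm{B}_{\star} \widehat{T}$ both $\bm{\epsilon}(\widehat{\bm{v}} \circ \bm{\alpha}^{-1}) - \tfrac{1}{3}\text{div}(\widehat{\bm{v}} \circ \bm{\alpha}^{-1}) \mathbb{I}$ and $\bm{\Psi}_{\ell}(\widehat{\bm{v}} \circ \bm{\alpha}^{-1})$ would be zero; Lemma~\ref{curl_curl_seminorm_lemma} applied on $T_{\star}$ would then force $\widehat{\bm{v}} \circ \bm{\alpha}^{-1}$ to be a constant vector, and since $\bm{\alpha}$ is linear and invertible, $\widehat{\bm{v}}$ itself would be a constant vector on $\widehat{T}$. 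Such $\widehat{\bm{v}}$ must be excluded at the outset (otherwise $\Lambda_{\widehat{\bm{v}}}$ is identically $0/0$); with that exclusion, the denominator is strictly positive throughout $GL(3)$ and $\Lambda_{\widehat{\bm{v}}}$ is continuous as the quotient of two continuous, strictly positive functions of $\bm{B}$.
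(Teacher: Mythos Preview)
Your proof is correct and takes a genuinely different route from the paper. You argue that the numerator and denominator of $\Lambda_{\widehat{\bm v}}$ are separately continuous on $GL(3)$ (via the change of variables $\bm x=\bm B\widehat{\bm x}$, which reduces each quantity to a fixed integral over $\widehat T$ multiplied by a polynomial in the entries of $\bm B^{-1}$ and $|\det\bm B|$), and then invoke Lemma~\ref{curl_curl_seminorm_lemma} to exclude zeros of the denominator. The paper instead produces an explicit modulus of continuity: it writes the difference $\Lambda_{\widehat{\bm v}}(\bm B_1)-\Lambda_{\widehat{\bm v}}(\bm B_2)$ as a single fraction, uses $|cd-ef|\le(|c|+|f|)(|d-f|+|c-e|)$ and reverse triangle inequalities to reduce to differences like $\bigl|\,\widehat{\bm v}\circ\bm\alpha_1^{-1}-\widehat{\bm v}\circ\bm\alpha_2^{-1}\,\bigr|_{H^1(T_1\cup T_2)}$, and then bounds those by $\|\bm B_1^{-1}-\bm B_2^{-1}\|_F$ via the chain rule. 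Your approach is cleaner and shorter for the lemma as stated, and you make explicit the exclusion of constant $\widehat{\bm v}$ that the paper leaves implicit. The paper's longer computation is aimed at more than bare continuity: the explicit bound in terms of $\|\bm B_1^{-1}-\bm B_2^{-1}\|_F$ is what the proof of Theorem~\ref{local_korn_lemma} cites in order to claim \emph{equicontinuity} of the family $\{\Lambda_{\widehat{\bm v}}:\|\widehat{\bm v}\|_{H^2(\widehat T)}=1\}$, which is needed so that the supremum $k(\bm B)$ is continuous. Your argument, as written, gives only pointwise continuity for each fixed $\widehat{\bm v}$; if you want it to feed into the theorem you would still need a uniform-in-$\widehat{\bm v}$ estimate (one minor note: in the paper $\ell$ is a fixed mesh length scale, not $\mathrm{diam}(T)$, but this does not affect your continuity argument).
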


\begin{proof}
    In order to prove continuity of the function in Eq.~\eqref{particular_function}, our goal is to construct an upper bound for the following quantity
    \begin{align}
        \nonumber \left|\Lambda_{\widehat{\bm{v}}}(\bm{B}_1) - \Lambda_{\widehat{\bm{v}}}(\bm{B}_2) \right| &= \Bigg| \frac{\left| \widehat{\bm{v}} \circ \bm{\alpha}_{1}^{-1} \right|_{H^{1}(T_1)}}{ \left\| \bm{\epsilon} \left(\widehat{\bm{v}} \circ \bm{\alpha}_{1}^{-1}\right) - \frac{1}{3} \text{div}(\widehat{\bm{v}} \circ \bm{\alpha}_{1}^{-1} ) \mathbb{I} \right\|_{L_{2}(T_1) \times L_{2}(T_1)} +\bm{\Psi}_{\ell} \left(\widehat{\bm{v}} \circ \bm{\alpha}_{1}^{-1} \right)} \\[1.5ex]
        &- \frac{\left| \widehat{\bm{v}} \circ \bm{\alpha}_{2}^{-1} \right|_{H^{1}(T_2)}}{ \left\| \bm{\epsilon} \left(\widehat{\bm{v}} \circ \bm{\alpha}_{2}^{-1}\right) - \frac{1}{3} \text{div}(\widehat{\bm{v}} \circ \bm{\alpha}_{2}^{-1} ) \mathbb{I} \right\|_{L_{2}(T_2) \times L_{2}(T_2)} +\bm{\Psi}_{\ell} \left(\widehat{\bm{v}} \circ \bm{\alpha}_{2}^{-1} \right)} \Bigg|, \label{quotient_diff}
    \end{align}
    in terms of $\left\|\bm{B}_{1}^{-1} - \bm{B}_{2}^{-1} \right\|_{F}$, where $T_1$ and $T_2$ are generic tetrahedra, (figure~\ref{continuity_fig} illustrates the geometric configuration). 
    \begin{figure}[h!]
        \includegraphics[width=8cm]{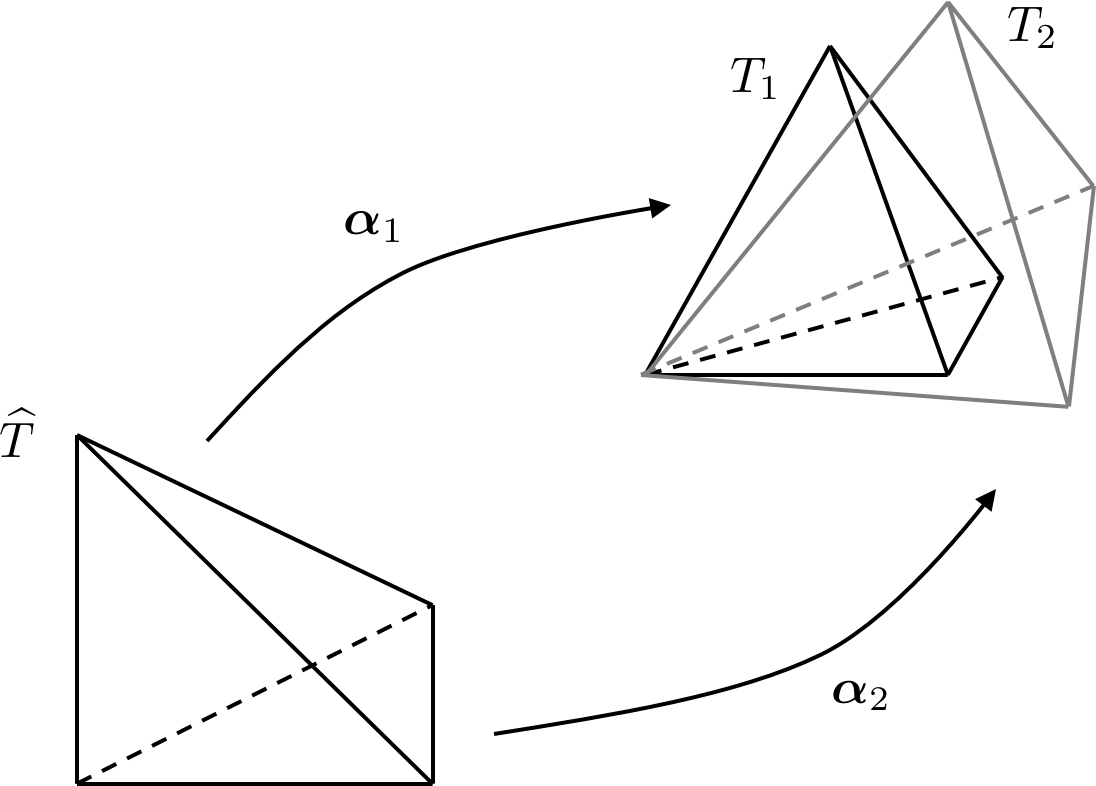}
        \caption{Mapping between reference tetrahedra $\widehat{T}$ and arbitrary tetrahedra $T_1$ and $T_2$.}
        \label{continuity_fig}
    \end{figure}
    Towards this end, we define the following surrogates for the quotient denominators in Eq.~\eqref{quotient_diff}
    \begin{align*}
        \mathcal{S}_1 &= \left\| \bm{\epsilon} \left(\widehat{\bm{v}} \circ \bm{\alpha}_{1}^{-1}\right) - \frac{1}{3} \text{div}(\widehat{\bm{v}} \circ \bm{\alpha}_{1}^{-1} ) \mathbb{I} \right\|_{L_{2}(T_1) \times L_{2}(T_1)} +\bm{\Psi}_{\ell} \left(\widehat{\bm{v}} \circ \bm{\alpha}_{1}^{-1} \right), \\[1.5ex]
        \mathcal{S}_{2} &= \left\| \bm{\epsilon} \left(\widehat{\bm{v}} \circ \bm{\alpha}_{2}^{-1}\right) - \frac{1}{3} \text{div}(\widehat{\bm{v}} \circ \bm{\alpha}_{2}^{-1} ) \mathbb{I} \right\|_{L_{2}(T_2) \times L_{2}(T_2)} +\bm{\Psi}_{\ell} \left(\widehat{\bm{v}} \circ \bm{\alpha}_{2}^{-1} \right).
    \end{align*}
    Upon substituting the expressions above into Eq.~\eqref{quotient_diff}, and manipulating the result, we obtain
    \begin{align}
         \left|\Lambda_{\widehat{\bm{v}}}(\bm{B}_1) - \Lambda_{\widehat{\bm{v}}}(\bm{B}_2) \right| &= \left| \frac{\left| \widehat{\bm{v}} \circ \bm{\alpha}_{1}^{-1} \right|_{H^{1}(T_1)} \mathcal{S}_{2} - \left| \widehat{\bm{v}} \circ \bm{\alpha}_{2}^{-1} \right|_{H^{1}(T_2)} \mathcal{S}_{1}}{\mathcal{S}_1 \mathcal{S}_{2}} \right|. \label{quotient_diff_one}
    \end{align}
    Next, we observe that the following identity holds for generic real numbers $c$, $d$, $e$, and~$f$
    \begin{align*}
        \left| cd-ef\right| &\leq |c| |d-f| + |f| |c - e| \leq \left(|c| + |f| \right) \left(|d-f| + |c-e| \right).
    \end{align*}
    On substituting this identity into the numerator of Eq.~\eqref{quotient_diff_one}, we obtain
    \begin{align}                                       &\left|\Lambda_{\widehat{\bm{v}}}(\bm{B}_1) - \Lambda_{\widehat{\bm{v}}}(\bm{B}_2) \right| \leq C \left( \left| \mathcal{S}_1 - \mathcal{S}_2 \right| + \left| \left| \widehat{\bm{v}} \circ \bm{\alpha}_{1}^{-1} \right|_{H^{1}(T_1)} - \left| \widehat{\bm{v}} \circ \bm{\alpha}_{2}^{-1} \right|_{H^{1}(T_2)}  \right| \right), \label{quotient_diff_two}
    \end{align}
    where
    \begin{align*}
        C = \frac{\left| \widehat{\bm{v}} \circ \bm{\alpha}_{1}^{-1} \right|_{H^{1}(T_1)} + \mathcal{S}_1 }{\mathcal{S}_1 \mathcal{S}_2}.
    \end{align*}
    Now, upon expanding the RHS of Eq.~\eqref{quotient_diff_two}, and extending the domains of integration for each seminorm over the domain $T_1 \cup T_2$, we obtain
    \begin{align}
        \nonumber &  \left|\Lambda_{\widehat{\bm{v}}}(\bm{B}_1) - \Lambda_{\widehat{\bm{v}}}(\bm{B}_2) \right| \\[1.5ex]
        \nonumber &\leq C \Bigg( \Bigg| \left\| \bm{\epsilon} \left(\widehat{\bm{v}} \circ \bm{\alpha}_{1}^{-1}\right) - \frac{1}{3} \text{div}(\widehat{\bm{v}} \circ \bm{\alpha}_{1}^{-1} ) \mathbb{I} \right\|_{L_{2}(T_1 \cup T_2) \times L_{2}(T_1 \cup T_2)} \\[1.5ex]
        &\nonumber - \left\| \bm{\epsilon} \left(\widehat{\bm{v}} \circ \bm{\alpha}_{2}^{-1}\right) - \frac{1}{3} \text{div}(\widehat{\bm{v}} \circ \bm{\alpha}_{2}^{-1} ) \mathbb{I} \right\|_{L_{2}(T_1 \cup T_2) \times L_{2}(T_1 \cup T_2)} \Bigg| \\[1.5ex]
        \nonumber &+\ell^{a} \left| \left| \int_{T_1 \cup T_2} \text{div}(\widehat{\bm{v}} \circ \bm{\alpha}_{1}^{-1}) \, dx \right| - \left| \int_{T_1 \cup T_2} \text{div}(\widehat{\bm{v}} \circ \bm{\alpha}_{2}^{-1}) \, dx \right| \right| \\[1.5ex]
        \nonumber &+ \ell^{a} \left| \left| \int_{T_1 \cup T_2} \text{curl}(\widehat{\bm{v}} \circ \bm{\alpha}_{1}^{-1} ) \, dx \right| - \left| \int_{T_1 \cup T_2} \text{curl}(\widehat{\bm{v}} \circ \bm{\alpha}_{2}^{-1} ) \, dx \right| \right| \\[1.5ex]
        \nonumber &+ \ell^{b} \left| \left| \int_{T_1 \cup T_2} \text{curl}(\text{curl} ( \widehat{\bm{v}} \circ \bm{\alpha}_{1}^{-1} )) \, dx \right| - \left| \int_{T_1 \cup T_2} \text{curl}(\text{curl} ( \widehat{\bm{v}} \circ \bm{\alpha}_{2}^{-1} )) \, dx \right| \right| \\[1.5ex]
        \nonumber &+ \left| \left| \widehat{\bm{v}} \circ \bm{\alpha}_{1}^{-1} \right|_{H^{1}(T_1 \cup T_2)} - \left| \widehat{\bm{v}} \circ \bm{\alpha}_{2}^{-1} \right|_{H^{1}(T_1 \cup T_2)}  \right| \Bigg) 
        \end{align}
        and furthermore
        \begin{align}
        \nonumber &  \left|\Lambda_{\widehat{\bm{v}}}(\bm{B}_1) - \Lambda_{\widehat{\bm{v}}}(\bm{B}_2) \right| \\[1.5ex]
        \nonumber &\leq C \Bigg(\left\| \bm{\epsilon} \left(\widehat{\bm{v}} \circ \bm{\alpha}_{1}^{-1} - \widehat{\bm{v}} \circ \bm{\alpha}_{2}^{-1}\right) - \frac{1}{3} \text{div}(\widehat{\bm{v}} \circ \bm{\alpha}_{1}^{-1} - \widehat{\bm{v}} \circ \bm{\alpha}_{2}^{-1} ) \mathbb{I} \right\|_{L_{2}(T_1 \cup T_2) \times L_{2}(T_1 \cup T_2)} \\[1.5ex]
        \nonumber &+ \ell^{a} \left( \left| \int_{T_1 \cup T_2} \text{div}(\widehat{\bm{v}} \circ \bm{\alpha}_{1}^{-1} - \widehat{\bm{v}} \circ \bm{\alpha}_{2}^{-1}) \, dx \right| + \left| \int_{T_1 \cup T_2} \text{curl}(\widehat{\bm{v}} \circ \bm{\alpha}_{1}^{-1} - \widehat{\bm{v}} \circ \bm{\alpha}_{2}^{-1}) \, dx \right| \right) \\[1.5ex]
        &+ \ell^{b} \left| \int_{T_1 \cup T_2} \text{curl}(\text{curl} ( \widehat{\bm{v}} \circ \bm{\alpha}_{1}^{-1} - \widehat{\bm{v}} \circ \bm{\alpha}_{2}^{-1} )) \, dx \right| + \left| \widehat{\bm{v}} \circ \bm{\alpha}_{1}^{-1} - \widehat{\bm{v}} \circ \bm{\alpha}_{2}^{-1} \right|_{H^{1}(T_1 \cup T_2)} \Bigg). \label{quotient_diff_three}
    \end{align}
    Here, we have used the reverse triangle inequality on the last three lines. Next, the first term on the RHS of Eq.~\eqref{quotient_diff_three} can be treated as follows
    \begin{align}
        \nonumber &\left\| \bm{\epsilon} \left(\widehat{\bm{v}} \circ \bm{\alpha}_{1}^{-1} - \widehat{\bm{v}} \circ \bm{\alpha}_{2}^{-1}\right) - \frac{1}{3} \text{div}(\widehat{\bm{v}} \circ \bm{\alpha}_{1}^{-1} - \widehat{\bm{v}} \circ \bm{\alpha}_{2}^{-1} ) \mathbb{I} \right\|_{L_{2}(T_1 \cup T_2) \times L_{2}(T_1 \cup T_2)} \\[1.5ex]
        &\leq C \left| \widehat{\bm{v}} \circ \bm{\alpha}_{1}^{-1} - \widehat{\bm{v}} \circ \bm{\alpha}_{2}^{-1} \right|_{H^{1}(T_1 \cup T_2)}, \label{semi_bound_zero} 
    \end{align}
    in accordance with Lemma~\ref{grad_lemma}.
    In addition, we have that
    \begin{align}
        \nonumber \left| \int_{T_1 \cup T_2} \text{div}(\widehat{\bm{v}} \circ \bm{\alpha}_{1}^{-1} - \widehat{\bm{v}} \circ \bm{\alpha}_{2}^{-1}) \, dx \right| &\leq \int_{T_1 \cup T_2} \left| \text{div}(\widehat{\bm{v}} \circ \bm{\alpha}_{1}^{-1} - \widehat{\bm{v}} \circ \bm{\alpha}_{2}^{-1}) \right| \, dx \\[1.5ex]
        &\leq C \sqrt{\left| T_1 \cup T_2 \right|} \left| \widehat{\bm{v}} \circ \bm{\alpha}_{1}^{-1} - \widehat{\bm{v}} \circ \bm{\alpha}_{2}^{-1} \right|_{H^{1}(T_1 \cup T_2)}, \label{semi_bound_one}
    \end{align}
    where we have used H{\"o}lder's inequality. Next, following similar arguments, we obtain
    \begin{align}
        \left| \int_{T_1 \cup T_2} \text{curl}(\widehat{\bm{v}} \circ \bm{\alpha}_{1}^{-1} - \widehat{\bm{v}} \circ \bm{\alpha}_{2}^{-1}) \, dx \right| &\leq C \sqrt{\left| T_1 \cup T_2 \right|} \left| \widehat{\bm{v}} \circ \bm{\alpha}_{1}^{-1} - \widehat{\bm{v}} \circ \bm{\alpha}_{2}^{-1} \right|_{H^{1}(T_1 \cup T_2)}, \label{semi_bound_two}
        \\[1.5ex]
        \left| \int_{T_1 \cup T_2} \text{curl}(\text{curl} ( \widehat{\bm{v}} \circ \bm{\alpha}_{1}^{-1} - \widehat{\bm{v}} \circ \bm{\alpha}_{2}^{-1} )) \, dx \right| &\leq C \sqrt{\left| T_1 \cup T_2 \right|} \vertiii{\widehat{\bm{v}} \circ \bm{\alpha}_{1}^{-1} - \widehat{\bm{v}} \circ \bm{\alpha}_{2}^{-1}}_{H^{2}(T_1 \cup T_2)}. \label{semi_bound_three}
    \end{align}
    Now, on substituting Eqs.~\eqref{semi_bound_zero}--\eqref{semi_bound_three} into Eq.~\eqref{quotient_diff_three}, we obtain
    \begin{align}
        \nonumber   \left|\Lambda_{\widehat{\bm{v}}}(\bm{B}_1) - \Lambda_{\widehat{\bm{v}}}(\bm{B}_2) \right|  &\leq C \Bigg(  \left(1 + \ell^{a} \sqrt{\left| T_1 \cup T_2 \right|} \right) \left| \widehat{\bm{v}} \circ \bm{\alpha}_{1}^{-1} - \widehat{\bm{v}} \circ \bm{\alpha}_{2}^{-1} \right|_{H^{1}(T_1 \cup T_2)}  \\[1.5ex]
        &+\ell^{b} \sqrt{\left| T_1 \cup T_2 \right|} \vertiii{\widehat{\bm{v}} \circ \bm{\alpha}_{1}^{-1} - \widehat{\bm{v}} \circ \bm{\alpha}_{2}^{-1}}_{H^{2}(T_1 \cup T_2)} \Bigg). \label{quotient_diff_four}
    \end{align}
    The expression above can be further simplified by using the following chain-rule identities
    \begin{align}
        \text{grad} (\widehat{\bm{v}} \circ \bm{\alpha}^{-1} ) &= \left( \widehat{\text{grad}}(\widehat{\bm{v}}) \circ \bm{\alpha}^{-1} \right) \bm{B}^{-1},  \\[1.5ex]
        \text{hess} (\widehat{v}_{i} \circ \bm{\alpha}^{-1} ) &= \bm{B}^{-T} \left( \widehat{\text{hess}}(\widehat{v}_{i}) \circ \bm{\alpha}^{-1} \right) \bm{B}^{-1},  \\[1.5ex]
        \int_{\widehat{T}} \widehat{\phi} \, d\widehat{x} &= \int_{T} \left( \widehat{\phi} \circ \bm{\alpha}^{-1} \right) \text{det}(\bm{B}^{-1}) \, dx,
    \end{align}
    where $\widehat{\phi}$ is a generic scalar function of $\widehat{\bm{x}}$ and
    \begin{align*}
        \left[\widehat{\text{grad}}(\widehat{\phi}) \right]_{i} = \frac{\partial \widehat{\phi}}{\partial \widehat{x}_{i}}, \qquad \left[\widehat{\text{hess}} (\widehat{\phi})\right]_{ij} = \frac{\partial^{2} \widehat{\phi}}{\partial \widehat{x}_i\partial \widehat{x}_j}.
    \end{align*}
    Based on these identities, we obtain the following upper bound for the H1 term in Eq.~\eqref{quotient_diff_four}
    \begin{align}
        \nonumber &\left| \widehat{\bm{v}} \circ \bm{\alpha}_{1}^{-1} - \widehat{\bm{v}} \circ \bm{\alpha}_{2}^{-1} \right|_{H^{1}(T_1 \cup T_2)} \\[1.5ex]
        \nonumber &= \left( \int_{T_1 \cup T_2} \left| \text{grad}( \widehat{\bm{v}} \circ \bm{\alpha}_{1}^{-1} - \widehat{\bm{v}} \circ \bm{\alpha}_{2}^{-1}) \right|^{2} dx \right)^{1/2} \\[1.5ex]
        \nonumber &= \left( \int_{T_1 \cup T_2} \left| \left( \widehat{\text{grad}}(\widehat{\bm{v}}) \circ \bm{\alpha}_{1}^{-1} \right) \bm{B}_{1}^{-1} - \left( \widehat{\text{grad}}(\widehat{\bm{v}}) \circ \bm{\alpha}_{2}^{-1} \right) \bm{B}_{2}^{-1} \right|^{2} dx \right)^{1/2} \\[1.5ex]
        \nonumber &\leq C \Bigg( \int_{T_1} \left| \left( \widehat{\text{grad}}(\widehat{\bm{v}}) \circ \bm{\alpha}_{1}^{-1} \right) \left(\bm{B}_{1}^{-1} - \bm{B}_{2}^{-1} \right) \right|^{2} dx  + \int_{T_2 \setminus T_1} \left| \left( \widehat{\text{grad}}(\widehat{\bm{v}}) \circ \bm{\alpha}_{2}^{-1} \right) \left( \bm{B}_{1}^{-1} - \bm{B}_{2}^{-1} \right) \right|^{2} dx \Bigg)^{1/2}
        \\[1.5ex]
        \nonumber & \leq C \Bigg( \frac{1}{\text{det}(\bm{B}_{1}^{-1})} \int_{\widehat{T}} \left|  \widehat{\text{grad}}(\widehat{\bm{v}}) \left(\bm{B}_{1}^{-1} - \bm{B}_{2}^{-1} \right) \right|^{2} d\widehat{x}  +\frac{1}{\text{det}(\bm{B}_{2}^{-1})}   \int_{\widehat{T}} \left| \widehat{\text{grad}}(\widehat{\bm{v}})  \left( \bm{B}_{1}^{-1} - \bm{B}_{2}^{-1} \right) \right|^{2} d\widehat{x} \Bigg)^{1/2} \\[1.5ex]
       \nonumber & \leq C \left(\frac{1}{\text{det}(\bm{B}_{1}^{-1})} + \frac{1}{\text{det}(\bm{B}_{2}^{-1})} \right)^{1/2} \left( \int_{\widehat{T}} \left|  \widehat{\text{grad}}(\widehat{\bm{v}}) \left(\bm{B}_{1}^{-1} - \bm{B}_{2}^{-1} \right) \right|^{2} d\widehat{x} \right)^{1/2} \\[1.5ex]
       & \nonumber \leq C \left(\frac{1}{\text{det}(\bm{B}_{1}^{-1})} + \frac{1}{\text{det}(\bm{B}_{2}^{-1})} \right)^{1/2} \left| \widehat{\bm{v}} \right|_{H^{1}(\widehat{T})} \left\| \bm{B}_{1}^{-1} - \bm{B}_{2}^{-1} \right\|_{\infty} \\[1.5ex]
       &\leq C \left(\frac{1}{\text{det}(\bm{B}_{1}^{-1})} + \frac{1}{\text{det}(\bm{B}_{2}^{-1})} \right)^{1/2} \left\| \bm{B}_{1}^{-1} - \bm{B}_{2}^{-1} \right\|_{F}. \label{H1_bound}
    \end{align}
    In the above equations, we have used the triangle inequality, root-mean-square-arithmetic-mean inequality, H{\"o}lder's inequality, and the equivalence of norms in a finite-dimensional space. 
    
    In a similar fashion, for the H2 term in Eq.~\eqref{quotient_diff_four} we obtain the following
    \begin{align}
        &\left| \widehat{\bm{v}} \circ \bm{\alpha}_{1}^{-1} - \widehat{\bm{v}} \circ \bm{\alpha}_{2}^{-1} \right|_{H^{2}(T_1 \cup T_2)} 
        \leq C \left(\frac{1}{\text{det}(\bm{B}_{1}^{-1})} + \frac{1}{\text{det}(\bm{B}_{2}^{-1})} \right)^{1/2} \left\| \bm{B}_{1}^{-1} - \bm{B}_{2}^{-1} \right\|_{F}^{2}. \label{H2_bound}
    \end{align}
    Upon substituting Eqs.~\eqref{H1_bound} and \eqref{H2_bound} into Eq.~\eqref{quotient_diff_four}, we obtain
    \begin{align}
        \nonumber &  \left|\Lambda_{\widehat{\bm{v}}}(\bm{B}_1) - \Lambda_{\widehat{\bm{v}}}(\bm{B}_2) \right| \\[1.5ex]
        \nonumber &\leq C \left(\frac{1}{\text{det}(\bm{B}_{1}^{-1})} + \frac{1}{\text{det}(\bm{B}_{2}^{-1})} \right)^{1/2} \\[1.5ex]
        &\times \Bigg(1 +   \ell^{a} \sqrt{\left| T_1 \cup T_2 \right|} +\ell^{b} \sqrt{\left| T_1 \cup T_2 \right|} \left\| \bm{B}_{1}^{-1} - \bm{B}_{2}^{-1} \right\|_{F} \Bigg) \left\| \bm{B}_{1}^{-1} - \bm{B}_{2}^{-1} \right\|_{F}.
        \label{quotient_diff_five}
    \end{align}
    This completes the proof.
\end{proof}

We are now ready to present the final result of this section.

\begin{theorem}
        Suppose that $\mathcal{T}$ is a valid mesh with non-degenerate elements. Then, for all $\bm{v} \in \left[H^{2} (T) \right]^3$ and $T \in \mathcal{T}$, the following Korn's inequality holds
        \begin{align}
        \left| \bm{v} \right|_{H^{1}(T)} \leq \kappa(\theta_{T}) \Bigg[ & \left\| \bm{\epsilon}(\bm{v}) - \frac{1}{3} \text{div} (\bm{v}) \mathbb{I} \right\|_{L_2(T) \times L_2(T)} + \ell^{a} \left| \int_{T} \text{div}(\bm{v}) \, dx \right| \label{local_korn} \\[1.5ex]
        \nonumber & + \ell^{a} \left| \int_{T} \text{curl}(\bm{v}) \, dx \right| + \ell^{b} \left| \int_{T} \text{curl}( \text{curl} ( \bm{v} )) \, dx \right| \Bigg],
        \end{align}
        where $\ell$ is a characteristic length scale associated with $\mathcal{T}$, $a$ and $b$ are constants that depend on the number of dimensions (3), and $\kappa: \mathbb{R}_{+} \rightarrow \mathbb{R}_{+}$ is a continuous, decreasing function of the minimum angle of each tetrahedron, $\theta_{T}$. \label{local_korn_lemma}
\end{theorem}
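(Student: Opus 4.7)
The plan is to pull back the inequality to the reference tetrahedron $\widehat{T}$ via the affine map $\bm{\alpha}(\widehat{\bm{x}}) = \bm{B}\widehat{\bm{x}} + \bm{c}$ (so that $\bm{v} = \widehat{\bm{v}} \circ \bm{\alpha}^{-1}$), invoke the continuous generalized Korn's inequality on $\widehat{T}$, and then transfer the bound back to an arbitrary $T$ by combining the continuity statement of Lemma~\ref{continuity_lemma} with a compactness argument in the space of admissible $\bm{B}$. Since both sides of the claim are positively homogeneous in $\bm{v}$, and since the powers of $\ell$ in $\bm{\Psi}_{\ell}$ are chosen to make the two sides scale identically under dilation, the problem reduces to bounding the ratio $\Lambda_{\widehat{\bm{v}}}(\bm{B})$ of Eq.~\eqref{particular_function} uniformly over $\widehat{\bm{v}}$ in the unit sphere of $H^{2}(\widehat{T})$ and over all $\bm{B} \in GL(3)$ corresponding to tetrahedra with minimum angle at least $\theta_{T}$ and fixed diameter $\ell$.

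First I would apply Lemma~\ref{curl_curl_seminorm_lemma} on the fixed reference tetrahedron $\widehat{T}$ to produce an absolute Korn's inequality of the form $|\widehat{\bm{v}}|_{H^{1}(\widehat{T})} \leq \widehat{C} \left( \|\widehat{\bm{\epsilon}}(\widehat{\bm{v}}) - \frac{1}{3} \widehat{\text{div}}(\widehat{\bm{v}}) \mathbb{I}\|_{L_{2}(\widehat{T}) \times L_{2}(\widehat{T})} + \widehat{\bm{\Psi}}(\widehat{\bm{v}}) \right)$; this controls $\Lambda_{\widehat{\bm{v}}}(\mathbb{I})$ uniformly in $\widehat{\bm{v}}$, the case $\widehat{\bm{v}} \in \mathbf{CK}(\widehat{T})$ being handled by Lemma~\ref{projection_lemma}, which forces the seminorm to vanish only on constants. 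Next I would use Lemma~\ref{continuity_lemma} to propagate the bound from $\bm{B} = \mathbb{I}$ to a general admissible $\bm{B}$: the admissible set is precompact in $GL(3)$ because bounding $\theta_{T}$ from below prevents degeneracy, while fixing $\ell$ removes the overall scaling, so the continuous function $\Lambda_{\widehat{\bm{v}}}$ attains a finite maximum on it. Taking the joint supremum in $\widehat{\bm{v}}$ and $\bm{B}$ via a standard compactness-contradiction argument (using the compact embedding $H^{2}(\widehat{T}) \subset\subset L_{2}(\widehat{T})$ to extract limits in $\widehat{\bm{v}}$) then yields the desired constant $\kappa(\theta_{T})$; its continuous, decreasing dependence on $\theta_{T}$ follows from the monotone shrinkage of the admissible set of $\bm{B}$ as $\theta_{T}$ increases.

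The principal obstacle I anticipate is that the trace-free symmetric gradient and the three integral-based seminorm terms do not factor cleanly under affine pullback, in contrast to $|\cdot|_{H^{1}}$, which merely picks up factors of $\|\bm{B}^{-1}\|$ and $\det\bm{B}^{-1}$. Consequently one cannot simply push the reference Korn's inequality to $T$ by a direct change of variables; the continuity statement of Lemma~\ref{continuity_lemma} is the key substitute, and most of the genuine work lies in confirming that the compactness-plus-continuity argument collapses the full family of geometries with minimum angle $\geq \theta_{T}$ down to a single constant depending only on $\theta_{T}$, and not on the finer geometric invariants of $T$.
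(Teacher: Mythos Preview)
Your plan is essentially the paper's own argument: pull back to $\widehat{T}$ by the affine map, use Lemma~\ref{curl_curl_seminorm_lemma} to guarantee finiteness of the optimal constant on each fixed element, rewrite that constant as $k(\bm{B}) = \sup_{\|\widehat{\bm{v}}\|_{H^{2}(\widehat{T})}=1}\Lambda_{\widehat{\bm{v}}}(\bm{B})$, invoke Lemma~\ref{continuity_lemma}, and then use precompactness in $GL(3)$ of the matrices corresponding to tetrahedra with minimum angle $\ge\theta$ and normalized diameter. The identification of the obstacle (the trace-free symmetric gradient and the integral seminorm terms do not transform covariantly under affine pullback, so one must go through continuity in $\bm{B}$ rather than a direct change of variables) is exactly right.

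The one organizational difference worth flagging is how the uniformity in $\widehat{\bm{v}}$ is obtained. The paper does not run a compactness--contradiction argument in $\widehat{\bm{v}}$; instead it reads Lemma~\ref{continuity_lemma} (together with the normalization $\|\widehat{\bm{v}}\|_{H^{2}(\widehat{T})}=1$) as an \emph{equicontinuity} statement for the family $\{\Lambda_{\widehat{\bm{v}}}(\cdot)\}$, so that the pointwise supremum $k(\cdot)$ is itself continuous on $GL(3)$ and hence bounded on the precompact set of admissible $\bm{B}$. This is cleaner than a joint compactness argument and avoids having to pass to limits in $\widehat{\bm{v}}$. If you do pursue your route, note that $H^{2}(\widehat{T})\subset\subset L_{2}(\widehat{T})$ is not the embedding you want: both numerator and denominator of $\Lambda_{\widehat{\bm{v}}}$ involve first derivatives (and the $\text{curl}\,\text{curl}$ term needs second-order information), so at minimum you would need $H^{2}\subset\subset H^{1}$ together with weak $H^{2}$ compactness to pass to the limit.
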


\begin{proof}
    We begin by introducing the following inequality
    \begin{align}
        \left| \bm{v} \right|_{H^{1} (T)} \leq k(T) \left( \left\| \bm{\epsilon} \left(\bm{v}\right) - \frac{1}{3} \text{div} \left(\bm{v} \right) \mathbb{I} \right\|_{L_2(T) \times L_2(T)} + \bm{\Psi}_{\ell} \left(\bm{v}\right) \right), \qquad \forall \bm{v} \in \left[ H^2(T) \right]^{3}, \label{initial_bound}
    \end{align}
    where $k(T)$ is the smallest possible number such that the inequality above holds. We note that this inequality is guaranteed to hold for a suitable family of constants, in accordance with Lemma~\ref{curl_curl_seminorm_lemma}.
    
    Following the arguments in~\cite{Brenner04}, we introduce a mapping between the physical domain $T$ and a reference domain $\widehat{T}$, such that $\bm{\alpha}: \widehat{T} \rightarrow T$, where 
    \begin{align*}
        \bm{\alpha} (\widehat{\bm{x}}) = \bm{B} \widehat{\bm{x}} + \bm{b}.
    \end{align*}
    Here, $\bm{B}$ is a matrix which belongs to the Lie group of non-singular matrices $GL(3)$, (as we discussed previously), and $\bm{b}$ is a constant vector. If we assume without loss of generality that $\bm{b} = \bm{0}$, then we have that $k(T) = k(\bm{B})$. It then follows that
    \begin{align*}
        k(\bm{B}) = \sup_{\substack{\widehat{\bm{v}} \in \left[H^2(\widehat{T})\right]^{3} \\
        \left\| \widehat{\bm{v}}\right\|_{H^{2}(\widehat{T})} = 1}} \left( \frac{\left| \widehat{\bm{v}} \circ \bm{\alpha}^{-1} \right|_{H^{1}(T)} }{\left\| \bm{\epsilon} \left(\widehat{\bm{v}} \circ \bm{\alpha}^{-1} \right) - \frac{1}{3} \text{div} \left(\widehat{\bm{v}} \circ \bm{\alpha}^{-1}  \right) \mathbb{I} \right\|_{L_2(T) \times L_2(T)} + \bm{\Psi}_{\ell} \left(\widehat{\bm{v}} \circ \bm{\alpha}^{-1} \right) } \right).
    \end{align*}
    We note that Lemma~\ref{continuity_lemma} in conjunction with the fact that $\left\| \widehat{\bm{v}}\right\|_{H^{2}(\widehat{T})} = 1$ imply that the RHS of the equation above represents a family of equicontinuous functions on $GL(3)$. It immediately follows that the supremum of this family $k(\cdot)$ is  continuous on $GL(3)$, (see~\cite{dieudonne2011foundations} for details). 
    
    The argument above holds for a generic element $T \in \mathcal{T}$. Of course, the mesh is usually composed of many such elements, and it remains for us to generalize our result to the entire mesh. Therefore, we consider the set $\left\{T_i : i \in I \right\}$, the family of tetrahedral elements which are affine homeomorphic to the reference element $\widehat{T}$, and we assume that all elements in the mesh belong to this set. Furthermore, we assume that the aspect ratios of all elements in the set are uniformly bounded, and (without loss of generality) that $\text{diam}(T_i) = 1$ for all $i$. Under these circumstances, the following inequalities hold in accordance with Theorem 3.1.3 of~\cite{ciarlet2002finite}
    \begin{align*}
        \left\| \bm{B}_{i} \right\| \leq C, \qquad \left\| \bm{B}^{-1}_{i} \right\| \leq C,
    \end{align*}
    where $\left\| \cdot \right\|$ is the matrix 2-norm for the element-specific transformation matrix $\bm{B}_i$. The inequalities above imply that the set of matrices $\left\{ \bm{B}_i: i \in I \right\}$ is a precompact subset of $GL(3)$. In turn, the continuity of $k(\cdot)$ on $GL(3)$ ensures the boundedness of $\left\{k(\bm{B}_i): i \in I \right\}$. As a result, the following inequality holds
    \begin{align*}
        \left| \bm{v} \right|_{H^{1} (T_i)} \leq C \left( \left\| \bm{\epsilon} \left(\bm{v}\right) - \frac{1}{3} \text{div} \left(\bm{v} \right) \mathbb{I} \right\|_{L_2(T_i) \times L_2(T_i)} + \bm{\Psi}_{\ell} \left(\bm{v}\right) \right), \qquad \forall \bm{v} \in \left[ H^2(T_i) \right]^{3},
    \end{align*}
    where $C$ is a constant independent of the particular tetrahedron $T_i$.
    
    Finally, the constant $k(\bm{B})$ can be expressed in terms of the minimum angle, $\theta_T$ of $T$. In a natural fashion, we can define the following set
    \begin{align*}
        S_{\theta} = \left\{k(T) : T \, \text{is a tetrahedron and the minimum angle of} \; T \, \text{is} \geq \theta > 0 \right\}.
    \end{align*}
    This set is bounded because $k(\bm{B})$ is bounded (as discussed above). We can define $\eta(\theta) = \sup S_{\theta}$, which is a non-negative, decreasing function of $\theta$, where $\eta: \mathbb{R}_{+} \rightarrow \mathbb{R}_{+}$. Now, in accordance with Eq.~\eqref{initial_bound}, we have that
    \begin{align}
        \left| \bm{v} \right|_{H^{1} (T)} \leq \eta(\theta_{T}) \left( \left\| \bm{\epsilon} \left(\bm{v}\right) - \frac{1}{3} \text{div} \left(\bm{v} \right) \mathbb{I} \right\|_{L_2(T) \times L_2(T)} + \bm{\Psi}_{\ell} \left(\bm{v}\right) \right), \qquad \forall \bm{v} \in \left[ H^2(T) \right]^{3}. \label{near_final_bound}
    \end{align}
    We complete the proof by defining a continuous, decreasing function $\kappa(\theta_{T}) \geq \eta(\theta_{T})$, and substituting this function into Eq.~\eqref{near_final_bound} in order to obtain the desired result (Eq.~\eqref{local_korn}). 
\end{proof}


\bibliographystyle{amsplain}
\bibliography{technical-refs}

\end{document}